\def\N{\mathbb{N}}
\def\O{\mathcal{O}}
\def\C{\mathcal{C}}
\def\I{\mathcal{I}}
\def\Ff{\mathcal{F}}
\def\Bb{\mathcal{B}}
\DeclareMathOperator{\ctw}{\mathbf{ctw}}
\DeclareMathOperator{\pw}{\mathbf{pw}}
\def\barcs{\overleftarrow{A}}
\DeclareMathOperator{\first}{\mathsf{start}}
\DeclareMathOperator{\last}{\mathsf{end}}
\DeclareMathOperator{\cut}{\mathsf{cut}}
\DeclareMathOperator{\vcut}{\mathsf{vcut}}
\newcommand{\Pow}{\mathsf{Pow}}
\newcommand{\Comps}{\mathsf{Comps}}
\newcommand{\wh}[1]{\widehat{#1}}
\newcommand{\whH}{\wh{H}}
\newcommand{\Ss}{\mathcal{S}}
\theoremstyle{plain}
\newtheorem{theorem}{Theorem}
\newcommand{\newtheoremwithcrefformat}[2]{%
  \newtheorem{#1}[theorem]{#2}%
  \crefformat{#1}{##2\MakeUppercase#1~##1##3}%
  \Crefformat{#1}{##2\MakeUppercase#1~##1##3}%
}
\newcommand{\newseptheoremwithcrefformat}[2]{%
  \newtheorem{#1}{#2}%
  \crefformat{#1}{##2\MakeUppercase#1~##1##3}%
  \Crefformat{#1}{##2\MakeUppercase#1~##1##3}%
}
\theoremstyle{nonumberplain}
\newtheorem{proof}{Proof}
\newtheorem{clproof}{Proof}
\def\cqedsymbol{\ifmmode$\lrcorner$\else{\unskip\nobreak\hfil
\penalty50\hskip1em\null\nobreak\hfil$\lrcorner$
\parfillskip=0pt\finalhyphendemerits=0\endgraf}\fi}
\title{On the Erdős-Pósa property for immersions and topological minors in tournaments\thanks{This work is 
a part of projects that have received funding from the European Research Council (ERC) 
under the European Union's Horizon 2020 research and innovation programme, grant agreements No.~714704 (\L{}.~Bo\.zyk) and No.~677651 (Mi.~Pilipczuk).
}}
\author{\L{}ukasz Bo\.zyk
\and Micha\l{}~Pilipczuk}
\affiliation{
  Institute of Informatics, University of Warsaw, Poland}
\keywords{directed Erdős-Pósa property, packing and covering, immersions, topological minors, tournaments}
\begin{document}
\publicationdetails{24}{2022}{1}{12}{7099}

\begin{textblock}{20}(0.5, 11.6)
\includegraphics[width=40px]{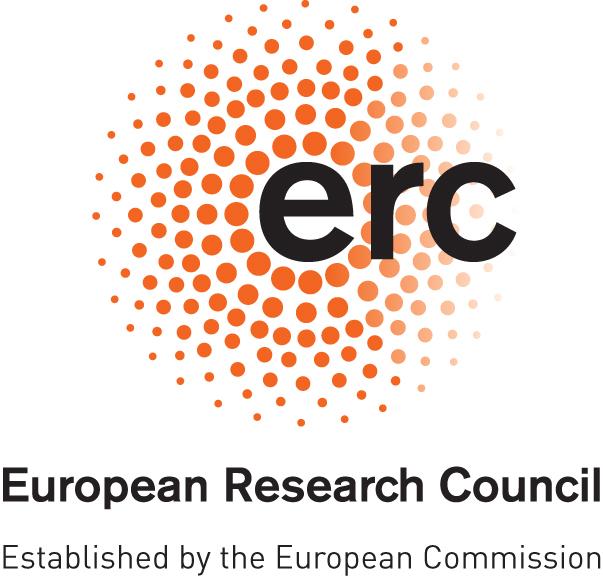}%
\end{textblock}
\begin{textblock}{20}(0.25, 12)
\includegraphics[width=60px]{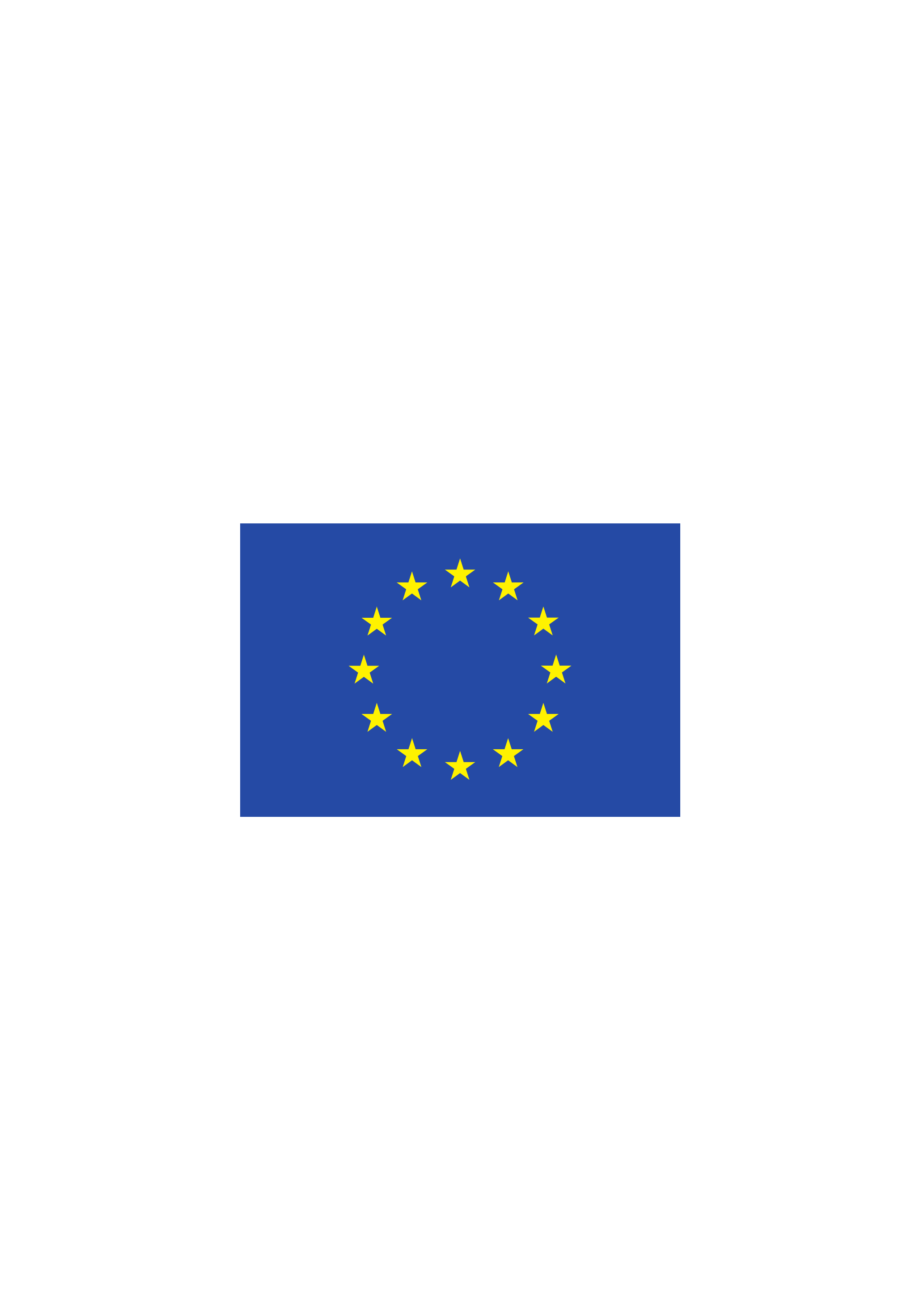}%
\end{textblock}

\maketitle

\begin{abstract}
We consider the Erdős-Pósa property for immersions and topological minors in tournaments.
We prove that for every simple digraph $H$, $k\in \N$, and tournament $T$, the following statements~hold:
\begin{itemize}
 \item If in $T$ one cannot find $k$ arc-disjoint immersion copies of $H$, then there exists a set of $\O_H(k^3)$ arcs that intersects all immersion copies of $H$ in $T$.
 \item If in $T$ one cannot find  $k$ vertex-disjoint topological minor copies of $H$, then there exists a set of $\O_H(k\log k)$ vertices that intersects all topological minor copies of $H$ in $T$.
\end{itemize}
This improves the results of Raymond [DMTCS '18], who proved similar statements under the assumption that $H$ is strongly connected.
\end{abstract}

\section{Introduction}

The Erd\H{o}s-P\'osa problems concern hitting-packing duality in set systems arising in different combinatorial settings. Suppose we consider a universe $U$ and a family $\Ss$ of subsets of this universe. The {\emph{packing number}} of the set system $(U,\Ss)$ is the maximum number of disjoint sets that one can find in $\Ss$, while the {\emph{hitting number}} is the minimum size of a subset of $U$ that intersects every set in $\Ss$. Clearly, the packing number is always a lower bound for the hitting number. In general, we cannot hope for the reverse inequality to hold even in the following weak sense: we would wish that the hitting number is bounded by a function of the packing number. However, such a bound often can be established when we have further assumptions on the origin of the set system $(U,\Ss)$, e.g., $\Ss$ comprises of some well-behaved combinatorial objects.

The first result of this kind was delivered by Erd\H{o}s and P\'osa~\cite{EP}, who proved that for every undirected graph $G$ and $k\in \N$, one can find in $G$ either $k$ vertex-disjoint cycles, or a set of $\O(k\log k)$ vertices that meets all the cycles. This idea can be generalized to packing and hitting minor models in graphs in the following sense. Consider any fixed undirected graph $H$. We say that $H$ has the {\emph{Erd\H{o}s-P\'osa property for minors}} if there exists a function $f$ such that for every graph $G$ and $k\in N$, one can find in $G$ either $k$ vertex-disjoint minor models of $H$, or a set of at most $f(k)$ vertices that meets all minor models of $H$. Thus, the result Erd\H{o}s and P\'osa asserts that the triangle $K_3$ has the Erd\H{o}s-P\'osa property for minors. Robertson and Seymour~\cite{RobertsonS86} proved that a graph $H$ has the Erd\H{o}s-P\'osa property for minors if and only if $H$ is planar.

Since the work of Erd\H{o}s and P\'osa, establishing the Erd\H{o}s-P\'osa property for different objects in graphs, as well as finding tight estimates on the best possible {\emph{bounding functions}} $f$, became a recurrent topic in graph theory. There are still many open problems in this area. For instance, the setting of directed graphs ({\emph{digraphs}}) remains rather scarcely explored. The analogue of the result of Erd\H{o}s and P\'osa for packing and hitting directed cycles was finally established by Reed et al.~\cite{ReedRST96} after functioning for over $20$ years as the {\emph{Younger's conjecture}}, while a characterization of strongly connected digraphs $H$ posessing the Erd\H{o}s-P\'osa property for topological minors was recently announced by Amiri et al.~\cite{AmiriKKW16}. We refer the reader to the survey of Raymond and Thilikos~\cite{RaymondT17} and to a website maintained by Raymond~\cite{jfr} for an overview of the current state of knowledge on Erd\H{o}s-P\'osa problems.

\paragraph*{Our contribution.} We consider the Erd\H{o}s-P\'osa problems for immersions and topological minors when the host graph $G$ is restricted to be a {\emph{tournament}}: a directed graph where every pair of vertices is connected by exactly one arc. Recall here that a directed graph $H$ can be {\emph{immersed}} in a digraph $D$ if one can find a mapping that maps vertices of $H$ to pairwise different vertices of $D$, and arcs of $H$ to pairwise arc-disjoint oriented paths in $H$ connecting the images of endpoints. The subgraph of $D$ consisting of all the vertices and arcs participating in the image of the mapping is called an {\emph{immersion copy}} of $H$ in $D$. We define topological minors and topological minor copies in the same way, except that we require the paths to be vertex-disjoint instead of arc-disjoint. See Section~\ref{sec:prelims} for formal definitions.

As usual, the Erd\H{o}s-P\'osa property for topological minors refers to packing vertex-disjoint topological minor copies and hitting topological minor copies with vertices. Since the notion of an immersion is based on arc-disjointness, it is more natural to speak about packing arc-disjoint immersion copies and hitting immersion copies with arcs instead of vertices. The following two definitions  formally introduce the properties we are interested in.

\begin{definition}
A directed graph $H$ has the \emph{Erd\H{o}s-P\'osa property for immersions in tournaments} if there is a function $f\colon\N\to\N$, called further a {\emph{bounding function}}, such that for every $k\in\N$ and every tournament~$T$, at least one of the following~holds:
\begin{itemize}[nosep]
\item $T$ contains $k$ pairwise arc-disjoint immersion copies of $H$; or
\item there exists a set of at most $f(k)$ arcs of $T$ that intersects all immersion copies of $H$ in $T$
\end{itemize} 
\end{definition}

\begin{definition}
A directed graph $H$ has the \emph{Erd\H{o}s-P\'osa property for topological minors in tournaments} if there is a function $f\colon\N\to\N$, called further a {\emph{bounding function}}, such that for every $k\in\N$ and every tournament $T$, at least one of the following~holds:
\begin{itemize}[nosep]
\item $T$ contains $k$ pairwise vertex-disjoint topological minor copies of $H$; or
\item there exists a set of at most $f(k)$ vertices of $T$ that intersects all topological minor copies of $H$ in $T$.
\end{itemize} 
\end{definition}

These two properties were investigated by Raymond~\cite{R18}, who proved that as long as $H$ is {\emph{simple}} --- there are no multiple arcs with the same head and tail --- and {\emph{strongly connected}} --- for every pair of vertices $u,v$, there are directed paths both from $u$ to $v$ and from $v$ to $u$ --- the considered Erd\H{o}s-P\'osa properties~hold.

\begin{theorem}[Theorem~2 of~\cite{R18}]
 Every simple, strongly connected directed graph has the Erd\H{o}s-P\'osa property for topological minors in tournaments.
\end{theorem}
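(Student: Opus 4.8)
The plan is to run a win/win argument controlled by \emph{tournament pathwidth} $\pw$, the width parameter governing topological containment in tournaments just as cutwidth $\ctw$ governs immersions. Two ingredients are needed. First, an excluded-pathwidth theorem of Fradkin--Seymour type: for every simple digraph $H$ there is a constant $\phi(H)$ so that $\pw(T)\ge\phi(H)$ implies that $T$ contains $H$ as a topological minor, and, in its packing form, for every $k$ there is $\phi(H,k)$ so that $\pw(T)\ge\phi(H,k)$ forces $k$ pairwise vertex-disjoint topological minor copies of $H$. Second, the elementary fact that $\pw$ does not increase when a vertex is deleted or when one passes to a subtournament. Now fix $k$ and a simple, strongly connected $H$, and put $p_0:=\phi(H,k)$. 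If $\pw(T)\ge p_0$ we obtain $k$ vertex-disjoint copies of $H$ and are done, so assume $\pw(T)<p_0$. The threshold $p_0$ depends on $k$, but this is harmless, as the Erd\H{o}s--P\'osa property only requires \emph{some} bounding function. Fix a path decomposition of $T$ of width less than $p_0$; its bags are vertex separators, so $T$ is ``almost linear'' and every subtournament inherits a path decomposition of width $<p_0$.

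The core of the bounded-pathwidth case is a \emph{bounded-size copy lemma}: there is $\psi=\psi(H,p_0)$ such that whenever a subtournament $T'$ of $T$ contains $H$ as a topological minor, it contains such a copy on at most $\psi$ vertices. This is where strong connectivity of $H$ is used. A topological minor copy of $H$ is itself strongly connected, hence is not disconnected by any single bag of the path decomposition; so if a minimum-size copy met a long run of consecutive bags, then over that run it would consist solely of the $|E(H)|$ subdivided arcs threading through bags of size $<p_0$, and by pigeonhole two far-apart bags would be traversed in the same ``pattern'' --- the same partial linking of at most $p_0$ vertices together with the same assignment of threads to arcs of $H$. The portion of the copy strictly between these two bags could then be short-circuited, producing a smaller copy and contradicting minimality. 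Strong connectivity is precisely what forces the copy into a bounded window of bags, after which bounded width bounds its size; for $H$ not strongly connected a copy may disperse into unboundedly many scattered pieces --- the very difficulty the main results of this paper have to handle.

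With the bounded-size copy lemma in hand, finish by iterative extraction. Keep a set $Z$ of deleted vertices, initially empty. While $T-Z$ contains a topological minor copy of $H$, select one on at most $\psi$ vertices (possible since $\pw(T-Z)\le\pw(T)<p_0$), add its vertices to $Z$, and record the copy; deletions never raise pathwidth, so the lemma stays applicable. If $k$ copies get recorded they are vertex-disjoint and we are done; otherwise the process stops with $T-Z$ free of topological minor copies of $H$ after recording at most $k-1$ of them, whence $Z$ hits all such copies and $|Z|\le(k-1)\,\psi(H,p_0)$, a function of $H$ and $k$ only.

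The step I expect to be the main obstacle is the bounded-size copy lemma: one must set up the path decomposition of the tournament as a bona fide sequence of separators, formalise the ``traversal pattern'' of a topological minor copy at a bag, bound the number of such patterns so that a sufficiently long run forces a repetition, and check that the short-circuiting again yields a valid topological minor copy of $H$ --- the quantitative pigeonhole bookkeeping being the technical heart. The packing form of the excluded-pathwidth theorem used in the first case is secondary; it can be read off from the Fradkin--Seymour structure theorem (large pathwidth produces a large highly connected obstruction that already displays many disjoint copies) or obtained by a short induction on $k$.
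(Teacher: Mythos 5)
Your overall win/win frame (either the pathwidth is large and you can pack $k$ copies, or it is bounded and you build a hitting set) matches the paper's first step: the paper gets the packing form of the excluded-pathwidth theorem simply by applying the Fradkin--Seymour/Fomin--Pilipczuk bound to the disjoint union of $k$ copies of $H$ (Corollary~\ref{cor:top-no-kH}). The problem is the second half. Your entire bounded-pathwidth argument rests on the bounded-size copy lemma, which you do not prove, and the sketch you give for it does not go through as stated. The ``short-circuiting'' step is where it breaks: if two cut positions $\gamma_1<\gamma_2$ carry the same traversal pattern, the vertex sets $\vcut[\gamma_1]$ and $\vcut[\gamma_2]$ are disjoint, so you cannot identify them, and you must instead reroute each thread from its $\gamma_1$-side stub to its continuation beyond $\gamma_2$. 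For a thread of the copy that travels right-to-left across $[\gamma_1,\gamma_2]$ (such threads must exist, since $H$ is strongly connected and some arcs of $H$ point from later components of the layout to earlier ones), the only arcs available between vertices with disjoint intervals point left-to-right, so no single arc (and no short path, in general) realizes the shortcut; and even for left-to-right threads the shortcut arc is only guaranteed when the relevant intervals are disjoint, which bounded width does not ensure, since individual intervals may be arbitrarily long. So the lemma's proof, which you yourself identify as the technical heart, is genuinely missing, and it is not clear the pumping scheme you describe can be repaired without substantial additional machinery (a linkage/irrelevant-vertex argument).

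The paper shows this machinery is unnecessary. Its proof (Lemma~\ref{lem:top-hitting-strongly}) is a balanced divide-and-conquer on $k$ over the interval decomposition: pick the largest $\alpha$ such that the prefix $T[I[\alpha]]$ has no $\lceil k/2\rceil$ disjoint copies, observe that the suffix then has no $\lfloor k/2\rfloor$ disjoint copies, recurse on both sides, and add the single vertex cut $\vcut[\alpha+1]$ of size $\O(\|H\|k)$ to the hitting set. Strong connectivity enters only through the trivial observation that a strongly connected subgraph with vertices on both sides of a position of the interval decomposition must meet the corresponding cut. This yields an $\O_H(k\log k)$ bounding function with no need to control the size of individual copies. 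If you want to keep a greedy-extraction style argument, you would first have to actually establish the bounded-size copy lemma; otherwise, replace that half of your argument by the separator recursion.
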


\begin{theorem}[Theorem~3 of~\cite{R18}]
 Every simple, strongly connected directed graph has the Erd\H{o}s-P\'osa property for immersions in tournaments.
\end{theorem}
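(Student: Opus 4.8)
The plan is to prove this by a win/win dichotomy on the \emph{cutwidth} $\ctw(T)$ of the host tournament, exploiting one structural feature of $H$: since $H$ is strongly connected, \emph{every} immersion copy of $H$ in any digraph is itself strongly connected, being the union of arc-disjoint paths realizing a strongly connected pattern. Write $h=|V(H)|$ and $m=|A(H)|$; for $h\le1$ the statement is trivial, so assume $h\ge2$. Fix a threshold $w_0=w_0(H,k)$ to be determined. If $\ctw(T)\ge w_0$ we will exhibit $k$ pairwise arc-disjoint immersion copies of $H$, and if $\ctw(T)<w_0$ we will extract a hitting set of arcs of size $\O_H(k\cdot w_0(H,k))$; together these give the Erdős-Pósa property with a bounding function of that order.

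For the first case, first note that $H$ is immersed in \emph{some} tournament $R$: take $V(H)$ together with one private new vertex $p_a$ for each arc $a=(u,v)$ of $H$, insert arcs $u\to p_a$ and $p_a\to v$, and orient all remaining pairs arbitrarily; then $H$ immerses into this tournament $R$ on $h+m$ vertices by routing each arc through its private vertex, and the private vertices guarantee arc-disjointness. Let $R^{(k)}$ be any tournament containing $k$ vertex-disjoint copies of $R$ (for instance, $k$ disjoint copies of $R$ with the edges between them oriented arbitrarily). By the Chudnovsky--Fradkin--Seymour structure theory relating tournament cutwidth and immersions, there is a value $w_0=w_0(k(h+m))$ such that $\ctw(T)\ge w_0$ forces $T$ to immerse $R^{(k)}$; composing immersions (shortcutting walks to paths as needed) and restricting to the $k$ vertex-disjoint copies of $R$ inside $R^{(k)}$ then yields $k$ pairwise arc-disjoint immersion copies of $R$, hence of $H$, in $T$. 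The step I expect to be the main obstacle is exactly this one: securing a version of the cutwidth--immersion theorem in a quantitatively usable form, so that $w_0$ grows only polynomially (ideally linearly) in $k$ and the final bounding function stays polynomial --- essentially all of the heavy work is hidden here.

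In the remaining case fix a vertex ordering $v_1,\dots,v_n$ of $T$ witnessing $\ctw(T)<w_0$, so that for every cut position $p$ there are fewer than $w_0$ \emph{back arcs}, i.e.\ arcs directed from $\{v_{p+1},\dots,v_n\}$ to $\{v_1,\dots,v_p\}$. To each immersion copy $C$ of $H$ assign the interval $I_C=[\min P_C,\max P_C]$, where $P_C$ is the set of positions of all vertices appearing in $C$; since $h\ge2$ these intervals are nondegenerate, and two copies with disjoint intervals are vertex-disjoint, so if $k$ of the intervals are pairwise disjoint we are done with the packing outcome. Otherwise the interval graph on $\{I_C\}$ has independence number below $k$; interval graphs being perfect, and cliques in them corresponding by Helly's theorem to common points, there is a set of fewer than $k$ points meeting all the $I_C$, and rounding each to a neighbouring cut we obtain $\O(k)$ cuts such that every copy $C$ has vertices on both sides of at least one of them. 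Now strong connectedness of $C$ is used: whenever $C$ meets both sides of a cut $p$, it must use some arc of $T$ directed from the right side of $p$ to the left side, and that arc is one of the fewer than $w_0$ back arcs at $p$. Hence the back arcs at these $\O(k)$ cuts together form a set of $\O_H(k\cdot w_0(H,k))$ arcs intersecting every immersion copy of $H$ in $T$, completing the dichotomy. (One could instead first reduce to strongly connected $T$ --- each immersion copy of $H$ lies in a single strong component, and at most $k-1$ strong components can host a copy when $T$ has no $k$ arc-disjoint copies --- but the argument above already applies to arbitrary $T$ unchanged, so this is not needed.)
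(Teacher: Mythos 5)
Your proof is correct, and while the ``win'' side coincides with the paper's (your $w_0$ is exactly the contrapositive of \cref{cor:no-kH}, obtained by feeding the disjoint union of $k$ copies into the cutwidth--immersion theorem, \cref{thm:ctw-bound}), the covering side takes a genuinely different route. The paper (\cref{lem:hitting-strongly}) proceeds by induction on $k$: it locates the largest prefix of the ordering not containing $\lceil k/2\rceil$ copies, recurses on the two sides, and adds two cuts per level; the recursion is what brings the hitting set down to $\O_H(k^2)$. You instead apply the packing--piercing duality for intervals on a line (Helly plus perfection, or equivalently the greedy Gallai argument) to the intervals spanned by the copies in the ordering, obtaining either $k$ vertex-disjoint copies outright or fewer than $k$ piercing points, each rounded to at most two cut positions; strong connectivity of every immersion copy (the content of \cref{obs:strong}) forces each copy to consume a backward arc of one of these $\O(k)$ cuts. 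Your argument is non-recursive and structurally cleaner --- the hitting set is just a union of $\O(k)$ whole cuts, and the packing you produce is even vertex-disjoint --- but since each cut may have size $\Theta_H(k^2)$, it yields only an $\O_H(k^3)$ bounding function versus the paper's $\O_H(k^2)$. Both suffice for the qualitative statement being proved. The one place to be slightly careful, which you handle correctly by assuming $|H|\ge 2$, is that the whole cut-crossing argument needs every immersion copy to be a strongly connected digraph with at least one arc.
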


Raymond asked in~\cite{R18} whether the assumption that the digraph in question is strongly connected can be dropped, as it was important in his proof. We answer this question in affirmative by proving the~following.

\begin{theorem}\label{thm:main-imm}
Every simple directed graph $H$ has the Erd\H{o}s-P\'osa property for immersions in tournaments with bounding function $f(k)\in \O_H(k^3)$. 
\end{theorem}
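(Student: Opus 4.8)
The plan is to reduce the general case to Raymond's strongly connected case by analyzing the strongly connected components (SCCs) of $H$. Recall that the \emph{condensation} of $H$ is obtained by contracting each SCC of $H$ to a single vertex; since $H$ is a simple digraph, its condensation is a directed acyclic graph, and hence its vertices --- the SCCs of $H$ --- admit a topological ordering $C_1, C_2, \dots, C_r$ such that all arcs of $H$ between different SCCs go from some $C_i$ to some $C_j$ with $i < j$. The key structural observation about tournaments that I would exploit is that any immersion copy of $H$ in a tournament $T$ must realize this ordering: if $\phi$ is an immersion of $H$ in $T$, the ``branch'' sets of vertices corresponding to distinct SCCs are linearly ordered by the arcs of $T$ between them in a way consistent with (an extension of) the topological order, because within a single strongly connected $C_i$ the branch vertices together with the connecting paths form a strongly connected subdigraph, and in a tournament a strongly connected subdigraph sits in a single ``level'' relative to any other such subdigraph.

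The main engine would be an induction on the number $r$ of SCCs of $H$. For $r = 1$ the statement is exactly Raymond's theorem (the strongly connected case), giving some bounding function $f_1(k) \in \O_H(k^3)$ --- here I should check that Raymond's proof in fact yields a cubic bound, or else cite the polynomial bound he obtains; if it is only shown to be some function of $k$, then \cref{thm:main-imm} as stated requires redoing his argument with explicit bookkeeping, which would be an additional chunk of work. Assuming the cubic bound for $r = 1$, for the inductive step I would split $H$ as $H_1 \cup H_2$ where $H_1$ is the subdigraph induced by $C_1$ (the source SCC) together with an appropriate interface, and $H_2$ is induced by $C_2 \cup \dots \cup C_r$; the idea is that an immersion copy of $H$ decomposes, along the tournament ordering, into an immersion copy of $H_1$ living ``before'' an immersion copy of $H_2$, connected by arcs corresponding to the arcs of $H$ crossing from $C_1$ into later components.

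Concretely, I would argue as follows. Apply the inductive hypothesis (or Raymond's theorem) to find, in $T$, either $k$ arc-disjoint immersion copies of $H_1$ or a small arc set $X_1$ hitting all of them. In the latter case, having deleted $X_1$, no immersion copy of $H_1$ survives, hence no immersion copy of $H$ survives, so $X_1$ is the desired hitting set and we are done. In the former case we have $k$ arc-disjoint copies of $H_1$; each of them, by the ordering property, has a ``tail'' --- the set of branch vertices that must be used to launch the paths realizing the crossing arcs of $H$ --- and the crucial point is that the part of $T$ lying ``after'' such a copy is still a tournament in which we may look for a copy of $H_2$ attached appropriately. I would iterate: either among these $k$ copies of $H_1$ many of them extend to full copies of $H$ and arc-disjointness is maintained (because $H_2$-copies can be chosen vertex-far from the $H_1$-copies and from each other by a greedy/sunflower-type argument, using that a tournament on $n$ vertices minus few vertices is still a tournament where Raymond's machinery applies), in which case we get $k$ arc-disjoint copies of $H$; or the obstruction to extending, accumulated over all copies, is a small arc set that together with a small ``covering'' set from the $H_2$-side hits every copy of $H$. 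The bookkeeping --- how the $\O_H(k^3)$ bound propagates through $r$ levels of induction --- would have to be done carefully, ideally by proving the stronger statement that the bounding function is $\O_H(k^3)$ with the constant depending on $|V(H)|$ and $r$ in a controlled (say polynomial) way, so that the recursion closes.

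The hard part, I expect, is precisely the extension/packing argument in the inductive step: showing that from $k$ arc-disjoint copies of $H_1$ one can either extract $k$ arc-disjoint copies of the whole $H$ or a small hitting set. The naive approach --- extend each $H_1$-copy independently --- fails because the extensions may overlap in arcs; one needs a sunflower-lemma or iterative-deletion argument to either disentangle the extensions or find a bounded ``core'' of arcs whose deletion destroys all extensions, and this is where the cubic (rather than linear) blow-up in $k$ presumably originates. A secondary subtlety is handling the crossing arcs of $H$ between $C_1$ and later components: their realizing paths in $T$ may weave between the region of the $H_1$-copy and the region of the $H_2$-copy, so the clean ``before/after'' picture needs to be justified via the tournament-ordering lemma sketched above, and one must ensure these connecting paths are arc-disjoint from everything else, which again feeds into the packing count. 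If Raymond's strongly connected result is available as a black box with a polynomial-in-$k$ bound, the whole argument is a structural reduction; if not, a self-contained proof of the $r=1$ case with the explicit cubic bound is itself a substantial component.
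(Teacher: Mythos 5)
Your high-level plan --- decompose $H$ into strongly connected components, fix a topological ordering, and reduce to the strongly connected case --- matches the paper's strategy, and you correctly identify that Raymond's strongly connected result must be redone quantitatively (the paper does this via the cutwidth bound $\ctw(T)\in\O(\|H\|^2k^2)$ from Fomin--Pilipczuk together with a balanced-cut induction). But the heart of your argument, the extension/packing dichotomy in the inductive step, is exactly the part you leave unproved, and it does not go through as sketched. Two concrete problems. First, ``either many of the $k$ copies of $H_1$ extend to full copies of $H$, or the accumulated obstruction is a small arc set hitting all copies of $H$'' is not a valid dichotomy: a copy of $H$ in $T$ need not use any of your $k$ chosen copies of $H_1$, so the failure of those particular copies to extend yields no hitting set for copies of $H$. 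Second, the ``before/after'' picture is not justified: only the non-trivial strong components of $H$ are forced into strongly connected pieces of $T$; trivial components and the paths realizing crossing arcs can interleave arbitrarily, and the topological ordering of the SCCs is not canonical --- different copies of $H$ may realize different orderings, so all of them must be handled (the paper takes a union of hitting sets over all orderings $\pi$, paying a factor $|H|!$).

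The paper's resolution of these issues is precisely what your sketch is missing: it works inside a width-$c$ ordering $\sigma$ of $T$ with $c=\O(\|H\|^2k^2)$, and for each topological ordering $\pi$ greedily carves $V(T)$ into consecutive $\sigma$-intervals $I_{\pi,1},\ldots,I_{\pi,h}$, each chosen minimally so as to contain $s=h\sqrt{c}$ \emph{vertex-disjoint} copies of the $i$-th component. If every interval succeeds, an averaging argument (choosing one copy per interval so that no backward arc joins two chosen copies, possible because each cut carries at most $c=s^2/h^2$ backward arcs) assembles $k$ vertex-disjoint copies of $H$, realizing each crossing arc of $H$ by a single forward arc of the tournament --- this is how arc-disjointness of the connections is obtained for free, where your sketch appeals to an unspecified sunflower/greedy argument. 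If some interval fails, the union over all $\pi$ and $i$ of the per-interval hitting sets plus the cut arcs at interval boundaries is shown to hit every copy of $H$, the key point being that after deleting the boundary cuts every non-trivial strongly connected subgraph lies inside a single base interval. Without this layout-based machinery, or a fully worked-out substitute for it, your inductive step has a genuine gap.
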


\begin{theorem}\label{thm:main-topminors}
 Every simple directed graph $H$ has the Erd\H{o}s-P\'osa property for topological minors in tournaments with bounding function $f(k)\in \O_H(k\log k)$. 
\end{theorem}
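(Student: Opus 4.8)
The plan is to reduce the statement to the case of strongly connected $H$, which is Theorem~2 of~\cite{R18}, carrying out the reduction so that the bounding function stays of the form $\O_H(k\log k)$. Throughout, for a digraph $G$ and a tournament $T$ let $\tau_G(T)$ be the minimum number of vertices meeting all topological minor copies of $G$ in $T$, and $\nu_G(T)$ the maximum number of pairwise vertex-disjoint such copies; establishing the Erd\H{o}s--P\'osa property with bounding function $\O_H(k\log k)$ amounts to proving that $\tau_H(T)$ is $\O_H(k\log k)$ whenever $\nu_H(T)<k$.

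I would argue by induction on the number of strongly connected components of $H$. If $H$ is strongly connected --- in particular if $H$ has a single vertex --- this is Theorem~2 of~\cite{R18}, whose bounding function for a fixed strongly connected digraph may be taken to be $\O_H(k\log k)$. Otherwise $H$ has at least two strongly connected components; fix a source $C$ of its condensation, that is, a strongly connected component with no incoming arc from another component, and put $H'=H-V(C)$. Then $H'$ is a simple digraph with one fewer strongly connected component, so by the inductive hypothesis it has the Erd\H{o}s--P\'osa property for topological minors in tournaments with bounding function $\O_H(k\log k)$. Two elementary facts drive the argument. First, every topological minor copy of $H$ in a tournament $T$ contains, as subgraphs, a topological minor copy of $C$ and a topological minor copy of $H'$; hence $\tau_H(T)\le\min\{\tau_C(T),\tau_{H'}(T)\}$. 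Second, the strongly connected components of $T$ can be linearly ordered $T_1,\dots,T_m$ so that every arc between $T_i$ and $T_j$ with $i<j$ is directed from $T_i$ to $T_j$; since a topological minor copy of a strongly connected digraph is contained in a single $T_i$, it follows that a copy $M_C$ of $C$ inside some $T_i$ together with a vertex-disjoint copy $M'$ of $H'$ such that every branch vertex of $M'$ receiving an arc of $H$ out of $C$ lies in $T_{i+1}\cup\dots\cup T_m$ can be joined into a topological minor copy of $H$: each such arc is routed through the single, correctly oriented, arc of $T$ between the two ranges.

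Now fix $T$ and $k$. The first fact disposes of most configurations: applying Theorem~2 of~\cite{R18} to $C$ and the inductive hypothesis to $H'$, with a suitable parameter $\Theta_H(k)$, we find that if $\nu_C(T)<\Theta_H(k)$ or $\nu_{H'}(T)<\Theta_H(k)$ then there is a vertex set of size $\O_H(k\log k)$ meeting all copies of $C$, or all copies of $H'$, hence all copies of $H$, and we are done. It remains to handle the case in which $T$ contains $\Theta_H(k)$ pairwise disjoint copies of $C$ and $\Theta_H(k)$ pairwise disjoint copies of $H'$, the goal being to assemble $k$ pairwise disjoint copies of $H$ from them. Using the second fact, I would attempt to select --- by a counting argument along the order $T_1,\dots,T_m$ --- $k$ of the disjoint copies of $C$ and $k$ of the disjoint copies of $H'$ so that each selected copy of $C$ lies strictly to the left of all branch vertices of its designated partner copy of $H'$ that receive an arc of $H$ out of $C$, while the whole family stays vertex-disjoint; each resulting pair is then a copy of $H$.

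This assembly step is where I expect the main difficulty. The copies supplied by the inductive hypothesis need not be placed conveniently along the strongly-connected-component order of $T$, so forcing the copies of $C$ to lie to the left of the relevant parts of the copies of $H'$ without spoiling disjointness takes work, and one must ensure the cost does not scale with the number of strongly connected components of $T$. More essentially, topological minor copies in tournaments can have unbounded size --- a single subdivided arc may be forced to form an arbitrarily long path --- so one cannot simply delete a copy and recurse; this is exactly what makes the Erd\H{o}s--P\'osa property nontrivial here, and the phenomenon is already visible when $C$ is an isolated component of $H$, i.e.\ in the pure disjoint-union case $H=C\cup H'$. Granting this step, the bounding function is under control: the recursion has depth equal to the number of strongly connected components of $H$, which is $\O_H(1)$, and each level contributes an additive $\O_H(k\log k)$ term, so the overall bound is $\O_H(k\log k)$ --- with the logarithmic factor inherited from the strongly connected base case, already present for directed cycles.
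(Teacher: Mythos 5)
Your reduction stalls exactly at the step you yourself flag as the main difficulty, and that step is not merely hard: the global dichotomy it rests on is false. Knowing that $T$ contains $\Theta_H(k)$ pairwise disjoint topological minor copies of $C$ and, separately, $\Theta_H(k)$ pairwise disjoint copies of $H'$ does not let you assemble even one copy of $H$. Both kinds of models can be arbitrarily large (a single subdivided arc may be a very long path), so nothing in your hypotheses prevents every copy of $C$ from intersecting every copy of $H'$; and even when disjoint pairs exist, the copies of $C$ may all lie ``to the right of'' the attachment vertices of the copies of $H'$ in the strong-component order of $T$, so the connecting arcs of $H$ cannot be routed. A counting argument along $T_1,\dots,T_m$ cannot repair this without some structural control over \emph{where} the copies live. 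A secondary but real problem is your base case: you invoke Theorem~2 of~\cite{R18} with bounding function $\O_H(k\log k)$, but Raymond's argument is purely qualitative (it rests on the Fradkin--Seymour pathwidth theorem and yields no explicit bound), so the quantitative strongly connected case must itself be proved; the paper does this in Lemma~\ref{lem:top-hitting-strongly} by a balanced-splitting recursion on an interval decomposition of width $\O(\|H\|k)$ supplied by Corollary~\ref{cor:top-no-kH}.

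The paper's way around the assembly problem is to work inside such a bounded-width interval decomposition from the outset, rather than peeling off one strong component of $H$ at a time. For each topological ordering $\pi$ of the strong components of $H$ it greedily cuts the layout into consecutive intervals $I_{\pi,1},\dots,I_{\pi,h}$, where $I_{\pi,i}$ is the shortest interval following $I_{\pi,i-1}$ whose induced subtournament contains $k$ disjoint copies of the $i$-th component under $\pi$. If some $\pi$ succeeds for every $i$, disjointness across components is automatic (the intervals are disjoint) and each connecting arc of $H$ is realized by a single forward arc between intervals, yielding $k$ disjoint copies of $H$. Otherwise, for every $\pi$ the intervals exhaust $V(T)$, and the union over all $\pi\in\Pi$ of the per-interval hitting sets (each of size $\O_H(k\log k)$ by the strongly connected case) together with the vertex cuts between consecutive intervals is shown to hit every copy of $H$. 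Localizing both the packing and the covering to these intervals is precisely what replaces --- and repairs --- your assembly step.
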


Observe that compared to the results reported by Raymond in~\cite{R18}, we also give explicit upper bounds on the bounding function that are polynomial in $k$: cubic for immersions and near-linear for topological minors. The presentation of~\cite{R18} does not claim any explicit estimates on the bounding function, as it relies on qualitative results of Chudnovsky et al.~\cite{ChudnovskyFS12} and of Fradkin and Seymour~\cite{FradkinS13}. These results respectively say the following: If a tournament $T$ excludes a fixed digraph $H$ as a immersion (respectively, as a topological minor), then the {\emph{cutwidth}} of $T$ (respetively, {\emph{pathwidth}}) of $T$ is bounded by a constant $c_H$ that depends only on $H$. Instead of relying on the results of~\cite{ChudnovskyFS12,FradkinS13}, we point out that we can use their quantitative improvements of Fomin and the second author~\cite{FoPi}, and thus obtain concrete bounds on the bounding function that are polynomial in $k$.

However, the bulk of our work concerns treating directed graphs $H$ that are possibly not strongly connected. Similarly to Raymond~\cite{R18}, using the results of~\cite{FoPi} we may restrict attention to tournaments of bounded cutwidth or pathwidth, which in both cases provides us with a suitable linear ``layout'' of the tournament. Then we analyze how an immersion or a topological minor copy of $H$ can look in this layout, and in particular how the strongly connected components of $H$ are ordered by it. The main point is to focus on every topological ordering of the strongly connected components of $H$ separately. Namely, we show that for a given topological ordering $\pi$, we can either find $k$ disjoint copies of $H$ respecting this ordering in the layout, or uncover a small hitting set for all copies respecting $\pi$. Then taking the union of the hitting sets for all topological orderings $\pi$ finishes the proof.

We do not expect the estimates on the bounding function given by Theorems~\ref{thm:main-imm} and~\ref{thm:main-topminors} to be optimal.
In fact, on the way to proving Theorem~\ref{thm:main-imm} we establish an improved bound of $\O_H(k^2)$ under the assumption that $H$ is strongly connected, which suggests that the same asymptotic bound (i.e. quadratic instead of cubic) should also hold without this assumption. However, to the best of our knowledge, in both cases it could even be that the optimal bounding function is linear in~$k$. Finding tighter estimates is an interesting open question.

\section{Preliminaries}\label{sec:prelims}

For a positive integer $n$, we denote $[n]\coloneqq \{1,\ldots,n\}$. Throughout this paper, all logarithms are base $2$.

We use standard graph terminology and notation.
All graphs considered in this paper are finite, simple (i.e. without self-loops or multiple arcs with same head and tail), 
 and directed (i.e. are {\emph{digraphs}}). For a digraph $D$, by $V(D)$ and $A(D)$ we denote the vertex set and the arc set of $D$, respectively. We denote
$$|D|\coloneqq |V(D)|\qquad \textrm{and}\qquad \|D\|\coloneqq |V(D)|+|A(D)|.$$
For $X\subseteq V(D)$, the subgraph {\emph{induced}} by $X$, denoted $D[X]$, comprises of the vertices of $X$ and all the arcs of $D$ with both endpoints in $X$.
By $D-X$ we denote the digraph $D[V(D)\setminus X]$. Further, if $F$ is a subset of arcs of $D$, then by $D-F$ we denote the digraph obtained from $D$ by removing all the arcs of $F$.

A {\emph{strong component}} of $D$ is an inclusion-wise maximal induced subgraph $C$ of $D$ that is {\emph{strongly connected}}, that is, for every pair of vertices $u$ and $v$ of $C$, there are directed paths in $C$ both from $u$ to $v$ and from $v$ to $u$.

\pagebreak[3]
\paragraph*{Tournaments.}
A simple digraph $T=(V,A)$ is called a \emph{tournament} if for every pair of distinct vertices $u,v\in V$, either $(u,v)\in A$, or $(v,u)\in A$ (but not both). Alternatively, one can represent the tournament $T$ by providing a pair $(\sigma,\barcs_\sigma(T))$, where $\sigma\colon V\to [|V|]$ is an {\emph{ordering}} of the set $V$ and $\barcs_{\sigma}(T)$ is the set of \emph{$\sigma$-backward arcs}, that is, $$\barcs_{\sigma}(T)\coloneqq\{\,(u,v)\in A\ \mid\ \sigma(u)>\sigma(v)\,\}.$$ All the remaining arcs are called \emph{$\sigma$-forward}. If the choice of ordering $\sigma$ is clear from the context, we will call the arcs simply \emph{backward} or \emph{forward}.

\paragraph*{Cutwidth.}
Let $T=(V,A)$ be a tournament and $\sigma$ be an ordering of $V$. For $\alpha,\beta\in \{0,1,\ldots,|V|\}$, $\alpha\leq \beta$, we define $$\sigma(\alpha,\beta]\coloneqq \{v\in V\mid \alpha<\sigma(v)\leq\beta\}.$$ Sets $\sigma(\alpha,\beta]$ as defined above shall be called \emph{$\sigma$-intervals}. If $I=\sigma(\alpha,\beta]$, we denote 
$$\first_\sigma(I)\coloneqq \alpha\qquad\textrm{and}\qquad \last_\sigma(I)\coloneqq \beta.$$ Moreover, let $\sigma[\alpha]\coloneqq \sigma(0,\alpha]$ and call this interval an \emph{$\alpha$-prefix} of $\sigma$. The set\[\cut[\alpha]=\{(u,v)\in A\mid \sigma(u)>\alpha\geq\sigma(v)\}\subseteq\barcs_\sigma(T)\] is called the \emph{$\alpha$-cut} of $\sigma$. The {\emph{width}} of the ordering $\sigma$ is equal to $\max_{0\leq\alpha\leq |V|}|\cut[\alpha]|$, and the \emph{cutwidth} of~$T$, denoted $\ctw(T)$, is the minimum width among all orderings of $V$. 

\paragraph*{Immersions.}
Digraph $\wh{H}$ is an \emph{immersion model} (or an \emph{immersion copy}) of a digraph $H$ if there exists a mapping $\phi$, called an {\emph{immersion embedding}}, such that: 
\begin{itemize}[nosep]
\item vertices of $H$ are mapped to pairwise different vertices of $\whH$;
\item each arc $(u,v)\in A(H)$ is mapped to a directed path in $\whH$ starting at $\phi(u)$ and ending at $\phi(v)$; and
\item each arc of $\whH$ belongs to exactly one of the paths $\{\phi(a)\colon a\in A(H)\}$.
\end{itemize}
If the immersion embedding $\phi$ is clear for the context, then for a subgraph $C$ of $H$ we define $\whH|_C$ to be the subgraph of $\whH$ consisting of all the vertices and arcs participating in the image of $C$ under $\phi$. Note that thus, $\whH|_C$ is an immersion model of $C$.

Let $H$ be a digraph.
We say that a digraph $G$ \emph{contains $H$ as an immersion} (or $H$ can be \emph{immersed in} $G$) if $G$ has a subgraph that is an immersion model of $H$. Digraph $G$ is called \emph{$H$-immersion-free} if it does not contain $H$ as an immersion.

We will use the following result of Fomin and the second author.

\begin{theorem}[Theorem~7.3 of \cite{FoPi}]\label{thm:ctw-bound}
Let $T$ be a tournament which does not contain a digraph $H$ as an immersion. Then $\ctw(T)\in \O(\|H\|^2)$.
\end{theorem}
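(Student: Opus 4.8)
The plan is to prove the contrapositive: there is an absolute constant $c$ such that if $\ctw(T)\geq c\|H\|^2$ then $H$ immerses into $T$. Write $h=|V(H)|$ and $m=|A(H)|$, so $\|H\|=h+m\geq\max(h,m)$. The whole argument is organised around one intermediate object, a \emph{jungle}: a set $W\subseteq V(T)$ with $|W|\geq h$ such that for every ordered pair of distinct vertices $x,y\in W$ there are at least $C'm$ pairwise arc-disjoint directed $x\to y$ paths in $T$, each passing through at most a bounded number $\ell_0$ of further vertices, where $C'>\ell_0$ are suitable absolute constants. Granting such a $W$, the immersion is produced greedily. Fix any injection $\phi\colon V(H)\to W$ as the branch map, list the arcs of $H$ as $e_1,\dots,e_m$, and route them one at a time: when routing $e_i=(a,b)$, the $i-1$ already-routed paths together use at most $\ell_0(i-1)<\ell_0 m<C'm$ arcs, hence meet fewer than $C'm$ of the at least $C'm$ guaranteed short $\phi(a)\to\phi(b)$ paths, and one such path avoiding all previously used arcs remains. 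The union of the $m$ resulting arc-disjoint paths together with $\phi(V(H))$ is an immersion model of $H$.

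It remains to manufacture a jungle from large cutwidth, and this is the core of the argument. Take $\sigma$ to be any ordering of $V(T)$ of minimum width; then there is a position $\alpha$ with $|\cut[\alpha]|\geq c\|H\|^2$. Every arc $(u,v)\in\cut[\alpha]$ satisfies $\sigma(v)\leq\alpha<\sigma(u)$, so identifying it with the interval $[\sigma(v),\sigma(u)]$ we get a family of $\geq c\|H\|^2$ intervals all containing $\alpha$. Applying Dilworth's theorem to the containment order on these intervals yields a subfamily of $\geq\sqrt c\,\|H\|$ cut arcs that is either \emph{nested} (intervals totally ordered by inclusion) or forms a \emph{staircase} (an antichain, so the left endpoints and the right endpoints are simultaneously increasing, in particular pairwise distinct). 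In the staircase case one reads off $\Omega(\sqrt c\,\|H\|)$ vertices to the left of $\alpha$ and as many to the right, joined by a grid-like pattern of backward arcs; combined with the fact that in a tournament every non-backward pair spans a forward arc, this lets one route, between any two of $\Omega(\sqrt c\,\|H\|)$ chosen vertices, $\Omega(\sqrt c\,\|H\|)$ arc-disjoint paths each of bounded length (one backward arc plus $O(1)$ forward arcs), i.e.\ a jungle. In the nested case one first performs a routine cleanup: if a single vertex is an endpoint of many cut arcs it already has huge in- or out-connectivity to one side and serves as a degenerate hub that can be fed into the construction, and otherwise one greedily thins the chain to pairwise-distinct endpoints with only a constant-factor loss, after which the same short-path routing applies. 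Choosing $c$ large enough absorbs the square-root and constant-factor losses so that both the size ($\geq h$) and the connectivity ($\geq C'm$) requirements of a jungle are met.

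The main obstacle is precisely the second paragraph: converting the purely quantitative fact ``some cut is enormous'' into the qualitative routing structure of a jungle, while keeping the routed paths short and arc-disjoint so that the greedy completion of the first paragraph goes through. This is the content of the Chudnovsky--Fradkin--Seymour structure theorem for tournament immersions; the point of the present, quantitative version is to carry all constants through the Dilworth and Menger-type steps so that the loss from $\ctw(T)$ to jungle parameters is only quadratic, and since an immersion of $H$ needs only $h$ branch vertices and $O(m)$-fold short connectivity this delivers the bound $\ctw(T)\in\O(\|H\|^2)$. Finally one should check the degenerate cases separately — e.g.\ $H$ with isolated vertices, or $\|H\|$ bounded by a constant — so that the $\O$-statement holds uniformly over all simple digraphs $H$.
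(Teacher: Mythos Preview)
The paper does not prove this statement at all: Theorem~\ref{thm:ctw-bound} is quoted verbatim as Theorem~7.3 of~\cite{FoPi} and used as a black box. There is therefore nothing in the present paper to compare your argument against; the authors rely on the result without reproducing or even sketching its proof.

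As for your sketch on its own merits, the overall architecture---contrapositive, extract from a large cut a well-connected ``jungle'' via a Dilworth-type dichotomy on the cut arcs, then greedily route an immersion of $H$ through short arc-disjoint paths---is indeed the shape of the argument in~\cite{FoPi} (building on~\cite{ChudnovskyFS12}). That said, what you have written is a plan rather than a proof: the routing claims in both the staircase and nested cases (``a grid-like pattern of backward arcs \ldots\ lets one route $\Omega(\sqrt c\,\|H\|)$ arc-disjoint paths each of bounded length'') are asserted, not established, and the ``routine cleanup'' in the nested case hides real work. If your goal were to actually supply a self-contained proof here, these steps would need to be written out with explicit constants; but for the purposes of this paper no proof is required, and you should simply cite~\cite{FoPi} as the authors do.
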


From Theorem~\ref{thm:ctw-bound} we can derive the following statement.

\begin{corollary}\label{cor:no-kH}
Let $T$ be a tournament which does not contain $k$ arc-disjoint immersion copies of a digraph $H$. Then $\ctw(T)\in \O(\|H\|^2k^2)$.
\end{corollary}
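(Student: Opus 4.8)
The plan is to build a tournament $G$ that contains $k$ arc-disjoint immersion copies of $H$ and whose size (number of vertices) is bounded polynomially in $\|H\|$ and $k$; then $T$, not containing $k$ arc-disjoint immersion copies of $H$, in particular cannot contain $G$ as an immersion, so \Cref{thm:ctw-bound} gives $\ctw(T)\in\O(\|G\|^2)$, and we just need $\|G\|\in\O(\|H\|\cdot k)$ (or thereabouts) to conclude.

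First I would construct $G$ concretely. Take $k$ pairwise vertex-disjoint copies $H_1,\dots,H_k$ of $H$, each on at most $|H|$ vertices, so together they use at most $k\cdot|H|$ vertices. Put them in a sequence and add all the remaining arcs, oriented forward from $H_i$ to $H_j$ whenever $i<j$; this turns the disjoint union into a tournament $G$ on exactly $k\cdot|H|$ vertices. Each $H_i$ is an induced subgraph of $G$ and hence an (identity) immersion copy of $H$, and these copies are vertex-disjoint, hence certainly arc-disjoint. So $G$ contains $k$ arc-disjoint immersion copies of $H$.

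Next, the key observation: if $T$ contained $G$ as an immersion, then the $k$ arc-disjoint immersion copies of $H$ inside $G$ would pull back, along the immersion embedding of $G$ into $T$, to $k$ arc-disjoint immersion copies of $H$ inside $T$ --- composition of immersion embeddings is an immersion embedding, and arc-disjointness of the images in $G$ is preserved because the path-images of distinct arcs of $G$ in $T$ are arc-disjoint. This contradicts the hypothesis on $T$. Hence $T$ is $G$-immersion-free, and \Cref{thm:ctw-bound} yields $\ctw(T)\in\O(\|G\|^2)$. Since $\|G\|=|V(G)|+|A(G)|\le k|H|+(k|H|)^2=\O(|H|^2k^2)\subseteq\O(\|H\|^2k^2)$, we are done.

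The only mild subtlety --- and the part most worth being careful about --- is the composition/pull-back step: one must check that when we immerse $G$ into $T$ and then restrict to the subgraph of $G$ formed by the $k$ copies of $H$, the resulting images in $T$ really are arc-disjoint immersion copies of $H$. This is exactly the content of the remark in the Immersions paragraph that $\whH|_C$ is an immersion model of $C$, applied with $C$ ranging over $H_1,\dots,H_k$; arc-disjointness across different $H_i$ follows because an immersion embedding sends arc-disjoint subgraphs of $G$ to arc-disjoint subgraphs of $T$. Everything else is a routine counting estimate, and the bound $\O(\|H\|^2k^2)$ comes out with room to spare (indeed $|A(G)|\le\binom{k|H|}{2}$, so the cut-free estimate could even be sharpened, but we do not need that here).
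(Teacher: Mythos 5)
Your overall strategy is the right one and matches the paper's: exhibit a digraph that contains $k$ arc-disjoint immersion copies of $H$, argue that $T$ cannot contain it as an immersion (your pull-back argument for this step is correct), and invoke \Cref{thm:ctw-bound}. However, there is a genuine quantitative gap: by completing the disjoint union of the $k$ copies of $H$ to a tournament $G$, you add up to $\binom{k|H|}{2}$ arcs, so $\|G\|=\Theta(k^2|H|^2)$ rather than $\O(k\|H\|)$. Applying \Cref{thm:ctw-bound} then gives $\ctw(T)\in\O(\|G\|^2)=\O(k^4\|H\|^4)$, not the claimed $\O(k^2\|H\|^2)$. Your last sentence conceals this by establishing only that $\|G\|\in\O(\|H\|^2k^2)$ and then declaring victory, even though you had correctly noted at the outset that you would need $\|G\|\in\O(\|H\|\cdot k)$ for the argument to close. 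Since the precise exponent here feeds directly into the constant $\dctw\|H\|^2k^2$ used in Lemmas~\ref{lem:hitting-strongly} and~\ref{lem:hitting-weakly}, and hence into the $\O_H(k^3)$ bound of \Cref{thm:main-imm}, this is not a cosmetic issue.

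The fix is to drop the completion step entirely. \Cref{thm:ctw-bound} excludes ``a digraph $H$'' as an immersion --- the excluded object is not required to be a tournament --- so you can take $D$ to be the plain disjoint union of $k$ copies of $H$. Then $\|D\|=k\|H\|$, the digraph $D$ visibly contains $k$ arc-disjoint (indeed vertex-disjoint) copies of $H$, so $T$ does not contain $D$ as an immersion, and \Cref{thm:ctw-bound} gives $\ctw(T)\in\O(\|D\|^2)=\O(\|H\|^2k^2)$. This is exactly the paper's proof.
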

\begin{proof}
 Let $D$ be the digraph obtained by taking $k$ vertex-disjoint copies of $H$. Clearly, $T$ does not contain $D$ as an immersion, hence from Theorem~\ref{thm:ctw-bound} we conclude that $\ctw(T)\in \O(\|D\|^2)=\O(\|H\|^2k^2)$. 
\end{proof}

\paragraph*{Pathwidth.} Denote by $\I$ the set of all nonempty intervals $[\alpha,\beta]\subseteq \mathbb{R}$ such that $\alpha,\beta\in \mathbb{Z}$. If $I=[\alpha,\beta]$, denote $\first(I)\coloneqq \alpha$ and $\last(I)\coloneqq \beta$. For $I,J\in\I$ we will write $I<J$ if and only if $\last(I)<\first(J)$.

For a tournament $T=(V,A)$, a function $I\colon V\to \I$ is called an \emph{interval decomposition} of $T$ if  for every pair of vertices $u,v\in V$ such that $I(u)<I(v)$, we have $(u,v)\in A$. In other words, every arc joining disjoint intervals is \emph{forward}. For $\alpha\in \mathbb{Z}$, the set
\[\vcut[\alpha]\coloneqq\{v\in V\mid \alpha\in I(v)\}\]
is called the \emph{$\alpha$-cut} of $I$. The \emph{width} of the decomposition $I$ is equal to $\max_{\alpha\in\mathbb{Z}}|\vcut[\alpha]|$, and the \emph{pathwidth} of $T$, denoted $\pw(T)$, is the minimum width among all interval decompositions of $T$.

Let us remark here that the definition of pathwidth used in~\cite{FoPi} is seemingly somewhat different to the one delivered above: it is based on a notion of a {\emph{path decomposition}}, which is a sequence of {\emph{bags}} that correspond to sets $\{\vcut[\alpha]\colon \alpha\in \mathbb{Z}\}$ in an interval decomposition. However, it is straightforward to verify that the definitions are in fact equivalent.

Also, it is easy to see that given an interval decomposition $I$ of a tournament $T$, one can adjust $I$ to an interval decomposition~$I'$ of the same width where no two intervals share an endpoint and no interval has length $0$. Indeed, whenever a subset of intervals all have endpoints at $\alpha\in \mathbb{Z}$, then one can shift those endpoints by pairwise different small reals --- positive for the intervals ending at $\alpha$ and negative for those starting at $\alpha$ --- so that they all become different, and then re-enumerate all the endpoints so that they stay integral. Similarly one can stretch an interval of length $0$ which doesn't share endpoints with any other interval to an interval of positive length.
Therefore, we will assume this property for all the considered interval decompositions:  $\{\first(I(u)),\last(I(u))\}\cap\{\first(I(v)),\last(I(v))\}=\varnothing$ for all $u\neq v$ and $\first(I(u))\neq \last(I(u))$ for all $u$. Moreover, by shifting all the intervals if necessary, we may (and will) assume that all endpoints correspond to non-negative integers.

If $I$ is an interval decomposition of a tournament $T=(V,A)$, then for $\alpha,\beta\in\mathbb{Z}$ we define
\[I[\alpha,\beta]\coloneqq\{v\in V\mid I(v)\subseteq [\alpha,\beta]\}.\]
In other words, $I[\alpha,\beta]$ is the set of all vertices of $T$ corresponding to intervals entirely contained in $[\alpha,\beta]$. Note that if $\alpha_1<\beta_1\leq\alpha_2<\beta_2$, then $I[\alpha_1,\beta_1]\cap I[\alpha_2,\beta_2]=\varnothing$. Also, let $I[\alpha]\coloneqq I[0,\alpha]$.

\paragraph*{Topological minors.} Digraph $\wh{H}$ is a \emph{topological minor model} (or a \emph{topological minor copy}) of a digraph $H$ if there exists a mapping $\phi$, called a {\emph{topological minor embedding}}, such that: 
\begin{itemize}[nosep]
\item vertices of $H$ are mapped to pairwise different vertices of $\whH$;
\item each arc $(u,v)\in A(H)$ is mapped to a directed path in $\whH$ starting at $\phi(u)$ and ending at $\phi(v)$; and
\item these paths are internally vertex-disjoint, do not contain any $\phi(u)$, $u\in V(H)$, as an internal vertex, and saturate the whole vertex set and arc set of $\whH$. In other words, every arc of $\whH$  and every vertex of $\whH$ that is not an image of a vertex of $H$ participates in the image $\phi(a)$ of exactly one arc $a\in A(H)$.
\end{itemize}
If the topological minor embedding $\phi$ is clear for the context, then for a subgraph $C$ of $H$ we define $\whH|_C$ to be the subgraph of $\whH$ consisting of all the vertices and arcs participating in the image of $C$ under $\phi$. Note that thus, $\whH|_C$ is a topological minor model of $C$.

Let $H$ be a digraph.
We say that a digraph $G$ \emph{contains $H$ as a topological minor} if $G$ has a subgraph that is a topological minor model of $H$. Digraph $G$ is called \emph{$H$-topological-minor-free} if it does not contain $H$ as a topological minor.

We will use another result of Fomin and the second author.

\begin{theorem}[Theorem~7.1 of \cite{FoPi}]\label{thm:pw-bound}
Let $T$ be a tournament which does not contain a digraph $H$ as a topological minor. Then $\pw(T)\in \O(\|H\|)$.
\end{theorem}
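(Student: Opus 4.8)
The plan is to prove the contrapositive: fixing a suitable absolute constant $c$, I would assume $\pw(T)\ge c\cdot\|H\|$ and construct a topological minor model of $H$ inside $T$. A tempting shortcut is to observe that every simple digraph on $h=|V(H)|$ vertices is a subgraph, hence (by the sub-model remark following the definition of topological minor models) a topological minor, of the bidirected complete digraph on $h$ vertices, so that it would suffice to find the latter; but this reduction is wasteful, since the bidirected clique has $\Theta(h^2)$ arcs whereas $\|H\|$ may be as small as $\Theta(h)$. Hence the real content is to route $H$ using a ``budget'' proportional to $h+|A(H)|=\|H\|$ rather than to $h^2$. Accordingly, the argument splits into two parts: (i) turn the hypothesis $\pw(T)\ge c\|H\|$ into a concrete combinatorial obstruction of size linear in $\|H\|$, and (ii) route $H$ through that obstruction.

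\emph{Part (i): from pathwidth to an obstruction.} Here I would invoke the min-max duality underlying pathwidth of semi-complete digraphs, in the quantitative form of Fomin and the second author~\cite{FoPi} (and, qualitatively, of Fradkin and Seymour~\cite{FradkinS13}): if $T$ admits no interval decomposition of width less than $N$, then $T$ contains a ``jungle'' of order $\Omega(N)$ --- roughly, a set $W$ of $\Omega(N)$ vertices that is strongly connected inside $T$ and, moreover, \emph{robustly well-linked}, in the sense that even after deleting any constantly many of its vertices one can still find, between any two remaining vertices of $W$, a short directed path using few internal vertices outside of $W$. The crucial quantitative point --- and the source of the \emph{linear}, rather than merely finite, dependence on $\|H\|$ --- is that the jungle must have size a constant fraction of the width; securing this requires the tight analysis of~\cite{FoPi}, essentially a potential/counting argument showing that any obstruction to a narrow interval decomposition can be compressed into such a dense substructure without asymptotic loss.

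\emph{Part (ii): routing $H$.} Given a jungle $W$ with $|W|\ge c'\|H\|$, I would fix an arbitrary injection $V(H)\to W$ and call the images \emph{branch vertices}; isolated vertices of $H$ are thereby already accounted for. Then I process the arcs of $H$ one at a time: for an arc $(u,v)$, using the robust well-linkedness of $W$ and the abundance of short connections in a tournament, route a directed path from the branch vertex of $u$ to the branch vertex of $v$ whose internal vertices are fresh, i.e.\ disjoint from all branch vertices and from the internal vertices of all previously routed paths. Each such path consumes only $\O(1)$ fresh vertices of $W$, so for $c'$ large enough the invariant ``at most $\O(\|H\|)$ vertices of $W$ used so far'' is maintained throughout all $|A(H)|$ steps and the well-linkedness is never exhausted. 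The union of the branch vertices with all routed paths is then a topological minor model of $H$ in $T$, a contradiction; hence $\pw(T)<c\|H\|$, which is \Cref{thm:pw-bound}.

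\emph{Main obstacle.} The heart of the argument, and its hardest part, is Part (i): pathwidth is a genuinely directed parameter and a semi-complete digraph may be nearly acyclic, so the obstruction has to be produced with attention to the direction in which paths can be routed, and --- crucially for the bound claimed --- it must be produced with size \emph{linear} in the width, which is precisely the quantitative improvement of~\cite{FoPi} over the earlier qualitative statements and what separates the $\O(\|H\|)$ bound here (and the analogous $\O(\|H\|^2)$ bound for cutwidth in \Cref{thm:ctw-bound}) from a non-explicit one. A secondary subtlety is that, unlike for immersions (where arc-disjointness suffices), topological minors demand \emph{internally vertex-disjoint} subdivision paths, so the well-linkedness of the obstruction must survive the removal of the vertices already spent on earlier paths; this is exactly what forces the ``robust'' strengthening of well-linkedness used in Part (i).
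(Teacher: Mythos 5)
This statement is not proved in the paper at all: it is quoted verbatim as Theorem~7.1 of \cite{FoPi} and used as a black box, so there is no internal proof to compare your attempt against. Judged on its own terms, your proposal has a genuine gap. Part~(ii), the greedy routing of the arcs of $H$ through a robustly well-linked set of size $c'\|H\|$ using $\O(1)$ fresh internal vertices per arc, is fine modulo a standard counting argument (each already-used vertex blocks at most one of the many internally disjoint short paths guaranteed between a pair of hubs, and only $\O(\|H\|)$ vertices are ever used). But Part~(i), which you yourself identify as ``the heart of the argument,'' is not an argument: you obtain the linear-size obstruction by ``invoking the min-max duality \ldots in the quantitative form of Fomin and the second author~\cite{FoPi}'' --- that is, by citing the very paper whose theorem you are supposed to be proving. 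The extraction of such an obstruction from large pathwidth (in \cite{FoPi} this proceeds via degree tangles and matching tangles, from which short jungles with bounded-length connecting paths are derived) \emph{is} the content of \Cref{thm:pw-bound}; asserting its existence by reference makes the proposal circular rather than a proof.

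Two secondary inaccuracies are worth flagging. First, the obstruction produced in \cite{FoPi} is not naturally described as ``a set $W$ that is strongly connected inside $T$''; what one gets is a set with many short, internally disjoint connections between every ordered pair of its vertices (a short jungle), where the connecting paths may leave $W$, and the quantitative point is the linear trade-off between width and jungle order. Second, your routing must also keep the subdivision paths internally disjoint from \emph{all} branch vertices (not only from previously used internal vertices), which is handled by the same counting but should be stated, since this is exactly where topological minors differ from immersions.
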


Applying Theorem~\ref{thm:pw-bound} directly to the graph that is the disjoint union of $k$ copies of a fixed digraph, we can derive the following statement.

\begin{corollary}\label{cor:top-no-kH}
Let $T$ be a tournament that does not contain $k$ vertex-disjoint topological minor copies of a digraph~$H$. Then $\pw(T)\in \O(\|H\|k)$.
\end{corollary}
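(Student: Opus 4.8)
The plan is to reduce the statement to Theorem~\ref{thm:pw-bound} by exactly the device used in the proof of Corollary~\ref{cor:no-kH}. Let $D$ be the digraph that is the disjoint union of $k$ pairwise vertex-disjoint copies of $H$, so that $\|D\|=k\|H\|$. The first step is to observe that $T$ cannot contain $D$ as a topological minor. Indeed, suppose $\whH$ were a topological minor model of $D$ in $T$ witnessed by a topological minor embedding $\phi$, and let $C_1,\ldots,C_k$ be the $k$ copies of $H$ forming $D$. Then each restriction $\whH|_{C_i}$ is a topological minor model of $H$, and these $k$ models are pairwise vertex-disjoint.

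The only point requiring a line of care in this step is that vertex-disjointness. The branch vertices $\phi(v)$, $v\in V(D)$, are pairwise distinct by definition; each subdivision path assigned to an arc of $D$ is internally vertex-disjoint from all the other subdivision paths and contains no branch vertex as an internal vertex; hence no vertex of $T$ can lie in both the image of $C_i$ and the image of $C_j$ for $i\neq j$. Consequently, $T$ would contain $k$ vertex-disjoint topological minor copies of $H$, contradicting the hypothesis.

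Once $T$ is known to be $D$-topological-minor-free, the second and final step is simply to apply Theorem~\ref{thm:pw-bound}, which yields $\pw(T)\in\O(\|D\|)=\O(\|H\|k)$, as claimed.

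I do not expect any genuine obstacle here: the argument is a direct transcription of the proof of Corollary~\ref{cor:no-kH}, with \emph{immersion}, \emph{arc-disjoint}, and \emph{cutwidth} replaced by \emph{topological minor}, \emph{vertex-disjoint}, and \emph{pathwidth}, and with Theorem~\ref{thm:ctw-bound} replaced by Theorem~\ref{thm:pw-bound}. The mild subtlety — that a topological minor model of a disjoint union splits into vertex-disjoint topological minor models of its summands — is immediate from the definition of a topological minor model and of $\whH|_C$ recorded just before the statement.
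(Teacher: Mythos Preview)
Your proposal is correct and matches the paper's own approach: the paper explicitly states that the corollary follows by applying Theorem~\ref{thm:pw-bound} to the disjoint union of $k$ copies of $H$, exactly as you do. The vertex-disjointness of the restricted models $\whH|_{C_i}$ is indeed immediate from the definition of a topological minor embedding, so there is no gap.
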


\newcommand{\dctw}{d_{\mathrm{ctw}}}
\newcommand{\dpw}{d_{\mathrm{pw}}}
\newcommand{\dEH}{d_{\mathrm{eh}}}

\section{Erdős-Pósa property for immersions}

In this section we prove Theorem~\ref{thm:main-imm}. In the following, a subset of arcs $F$ in a digraph $D$ is {\emph{$H$-hitting}} if the digraph $D-F$ is $H$-immersion-free. We also fix the constant $\dctw$ hidden in the $\O(\cdot)$-notation in Theorem~\ref{thm:ctw-bound}; that is, if a tournament $T$ does not contain $H$ as an immersion then $\ctw(T)\leq \dctw\|H\|^2$. Note that the constant hidden in the $\O(\cdot)$-notation in Corollary~\ref{cor:no-kH} is also equal to $\dctw$. Without loss of generality we assume that $\dctw$ is the square of an even integer.

We start with two straightforward observations which will be used several times later on.


\begin{observation}\label{obs:comp-map}
Suppose $\whH$ is an immersion model of a digraph $H$ in a digraph $G$, and $C$ is a strong component of $H$. Then there exists a strong component $D$ of $G$ such that $\whH|_C$ is a subgraph of $D$.
\end{observation}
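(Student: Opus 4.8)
The plan is to take for $D$ the strong component of $G$ that contains the image of a single vertex of $C$, and then verify that all of $\whH|_C$ fits inside it. Fix the immersion embedding $\phi$ witnessing that $\whH$ is an immersion model of $H$; recall that $\whH$ is a subgraph of $G$ and that $\whH|_C$ is exactly the subgraph formed by the vertices and arcs lying on the paths $\phi(a)$ for $a\in A(C)$. If $C$ is a single vertex $v$ with no arcs, then $\whH|_C$ is just $\{\phi(v)\}$, which trivially lies in (and is a subgraph of) its own strong component of $G$; so assume $C$ has at least one arc. Pick any arc $a_0=(x_0,y_0)\in A(C)$ and let $D$ be the strong component of $G$ with $\phi(x_0)\in V(D)$.

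The key point is that $\phi$ sends directed paths of $C$ to directed walks of $\whH|_C$: if $P=v_0v_1\cdots v_\ell$ is a directed path in $C$, then concatenating the paths $\phi(v_{i-1}v_i)$ in order gives a directed walk in $\whH|_C$ from $\phi(v_0)$ to $\phi(v_\ell)$. Since $C$ is strongly connected, such a walk exists in both directions between any two of its vertices, so all images $\phi(v)$ with $v\in V(C)$ lie in a single strong component of $\whH|_C$, hence in a single strong component of $G$, which must be $D$.

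Next I would show $V(\whH|_C)\subseteq V(D)$. An arbitrary vertex $w$ of $\whH|_C$ lies on some path $\phi(a)$ with $a=(x,y)\in A(C)$; along subpaths of $\phi(a)$ we obtain a directed walk from $\phi(x)$ to $w$ and one from $w$ to $\phi(y)$. Combining these with the walks from $\phi(x_0)$ to $\phi(x)$ and from $\phi(y)$ to $\phi(x_0)$ supplied by the previous paragraph (using strong connectivity of $C$), we conclude that $\phi(x_0)$ reaches $w$ and $w$ reaches $\phi(x_0)$ in $G$, so $w\in V(D)$.

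Finally, a strong component of $G$ is by definition an induced subgraph, so every arc of $\whH|_C$ — being an arc of $G$ with both endpoints in $V(D)$ — is an arc of $D$. Hence $\whH|_C$ is a subgraph of $D$, as required. The argument is entirely routine; the only step worth stating carefully is that concatenating the image paths along a directed path of $C$ yields a directed walk, since this is precisely what transfers strong connectivity from $C$ into $\whH|_C$.
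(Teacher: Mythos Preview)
Your argument is correct and is exactly the natural one: use strong connectivity of $C$ to produce directed walks in $\whH|_C$ between any two of its vertices, conclude they all lie in a single strong component $D$ of $G$, and then use that $D$ is induced to absorb the arcs. The paper itself gives no proof---it records the statement as a straightforward observation---so there is nothing further to compare.
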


\begin{observation}\label{obs:strong}
Let $T$ be a tournament and $\sigma$ be an ordering of $V(T)$. Let $H$ be a strongly connected simple digraph with at least one arc and let $\whH$ be an immersion model of $H$ in $T$. Let $v$ be the vertex of $V(\whH)$ that is last in the ordering $\sigma$. Then $A(\whH)$ contains a $\sigma$-backward arc with tail $v$.
\end{observation}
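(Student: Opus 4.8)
The plan is to reduce the statement to the elementary fact that \emph{every} vertex of the immersion model $\whH$ has positive out-degree in $\whH$, and then invoke the extremality of $v$ in the ordering $\sigma$.

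First I would record that $H$ has at least two vertices: $H$ is simple, so it has no self-loops, and it has at least one arc, whose two endpoints are therefore distinct. Since $H$ is strongly connected, for every $u\in V(H)$ there is a directed path in $H$ from $u$ to some vertex different from $u$, so every vertex of $H$ has out-degree at least $1$.

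Next, fix an immersion embedding $\phi$ witnessing that $\whH$ is an immersion model of $H$, and show that every vertex $x\in V(\whH)$ has an out-neighbour in $\whH$. Recall that $\whH$ is precisely the union of the paths $\{\phi(a)\colon a\in A(H)\}$. If $x$ is an internal vertex of some $\phi(a)$, then the arc of $\phi(a)$ leaving $x$ witnesses $\deg^+_{\whH}(x)\ge 1$. Otherwise $x=\phi(u)$ for some $u\in V(H)$; by the previous paragraph there is an arc $a=(u,w)\in A(H)$, and the first arc of the path $\phi(a)$ has tail $x=\phi(u)$. In either case $x$ has an out-neighbour $y\in V(\whH)$ with $y\neq x$.

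Finally, apply this to $v$: there is an arc $(v,x)\in A(\whH)$ with $x\in V(\whH)\setminus\{v\}$. Since $v$ is the last vertex of $V(\whH)$ in $\sigma$, we have $\sigma(v)>\sigma(x)$, so $(v,x)$ is a $\sigma$-backward arc of $\whH$ with tail $v$, as required. There is no genuine obstacle here; the only point meriting a moment's care is that the path $\phi(a)$ chosen in the branch-vertex case may have length $1$ (so that its first arc is $(\phi(u),\phi(w))$ and $x=\phi(w)$ is again a branch vertex) or length at least $2$ (so that $x$ is a subdivision vertex) --- but in both cases $x$ is a vertex of $\whH$ distinct from $v$, which is all the argument uses.
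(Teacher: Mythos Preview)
Your argument is correct. The paper does not actually supply a proof of this statement: it is labelled an Observation and introduced as one of ``two straightforward observations'', so there is no proof in the paper to compare against. Your route---show that every vertex of $\whH$ has out-degree at least one (using that $H$ is strongly connected on at least two vertices, so every vertex of $H$ has an out-arc, and every non-branch vertex of $\whH$ is internal to some path $\phi(a)$), then apply this to the $\sigma$-last vertex $v$---is exactly the natural justification the reader is expected to supply.
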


We now consider two special cases: when $H$ is acyclic and when $H$ is strongly connected. 
For the acyclic case, we will use the following corollary of the classic results of Erd\H{o}s and Hanani~\cite{EH}.

\begin{lemma}[follows from~\cite{EH}]\label{lem:EH}
 There exists a universal constant $\dEH$ such that for all positive integers $q,k$, in a complete graph on at least $\dEH\cdot q\sqrt{k}$ vertices one can find $k$ pairwise arc-disjoint complete subgraphs, each on $q$ vertices.   
\end{lemma}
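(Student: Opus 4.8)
The plan is to reduce the statement to a lower bound on a packing number and then feed it the classical asymptotics of Erd\H{o}s and Hanani. The key observation is that inside a complete graph two distinct copies of $K_q$ sharing two vertices also share the edge between those two vertices; hence a family of $k$ pairwise edge-disjoint copies of $K_q$ on a vertex set $V$ is exactly a family $S_1,\dots,S_k$ of $q$-element subsets of $V$ with $|S_i\cap S_j|\le 1$ for all $i\ne j$ (a partial Steiner system). So it suffices to show that on $n:=\lceil\dEH\,q\sqrt{k}\rceil$ vertices such a family exists.

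I would handle this in two regimes. When $k$ is below an absolute constant, $k$ pairwise \emph{disjoint} $q$-sets suffice, and these fit into $kq$ vertices, which is at most $\dEH\,q\sqrt{k}$ as soon as $\sqrt{k}\le\dEH$; this settles all $k\le\dEH^{2}$. For larger $k$ I would invoke the theorem of Erd\H{o}s and Hanani~\cite{EH} (the case of pairwise intersections at most $1$): the maximum size of such a family of $q$-subsets of an $n$-set is $(1-o(1))\binom{n}{2}/\binom{q}{2}$, so past the corresponding threshold it exceeds $\tfrac14(n/q)^{2}$, and $n\ge\dEH\,q\sqrt{k}$ then makes this at least $\tfrac14\dEH^{2}k\ge k$, for $\dEH\ge 2$. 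It is worth recording an explicit source of such packings: taking a prime $m\ge q$, partitioning a $qm$-element vertex set into $q$ groups each identified with $\mathbb{F}_m$, and assigning to every affine map $f\colon\mathbb{F}_m\to\mathbb{F}_m$ the $q$-set picking $f(i)$ from group $i$ (for $i$ in a fixed $q$-subset of $\mathbb{F}_m$) produces $m^{2}$ pairwise $1$-intersecting $q$-sets, since two affine maps agreeing in two points coincide; choosing $m$ of order $\sqrt{k}$ via Bertrand's postulate turns this into roughly $k$ edge-disjoint copies of $K_q$ inside a complete graph on $\Theta(q\sqrt{k})$ vertices.

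The step I expect to be the real obstacle is entirely quantitative: one must know that the Erd\H{o}s--Hanani estimate is already usable at $n=\Theta(q\sqrt{k})$, i.e.\ that a partial Steiner system attaining a constant fraction of the trivial upper bound $\binom{n}{2}/\binom{q}{2}$ exists this early; this effective packing statement is exactly the content borrowed from~\cite{EH}. Granting it, the rest is just the bookkeeping above — picking $\dEH$ large enough to absorb the constant in the packing bound and, simultaneously, to dispose of the trivial small-$k$ range.
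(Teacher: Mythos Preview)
The paper does not prove this lemma; it simply records it as a consequence of Erd\H{o}s--Hanani~\cite{EH} and moves on. Your reduction to packing $q$-subsets with pairwise intersections at most~$1$ is exactly the right translation, and deferring the packing estimate to~\cite{EH} is precisely what the paper does.

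There is, however, a genuine issue --- not with your strategy, but with the statement itself. As written, with a \emph{universal} constant $\dEH$, the lemma is false. A standard double count shows that $k$ $q$-subsets of $[n]$ with pairwise intersections at most~$1$ force $n\ge kq^2/(k+q-1)$: if $d_v$ is the number of chosen sets containing $v$, then $\sum_v d_v=kq$ and $\sum_v\binom{d_v}{2}=\sum_{i<j}|S_i\cap S_j|\le\binom{k}{2}$, whence Cauchy--Schwarz gives the bound. Taking $k=q$ yields $n\ge q^3/(2q-1)>q^2/2$, whereas the lemma would allow $n=\dEH\,q\sqrt{q}=\dEH\,q^{3/2}$; for $q>4\dEH^2$ this is impossible. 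Your explicit transversal-design construction runs into the same wall: it needs the prime $m\ge q$, hence at least $q^2$ vertices, which exceeds $\dEH\,q\sqrt{k}$ whenever $k<(q/\dEH)^2$. So the step you flagged as ``entirely quantitative'' is not merely delicate --- it is unresolvable uniformly in $q$.

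What~\cite{EH} actually yields is the lemma with $\dEH$ allowed to depend on $q$: their $o(1)$ in the packing asymptotic is for fixed $q$ as $n\to\infty$. This is harmless for the paper, since the only application (Lemma~\ref{lem:hitting-acyclic}) takes $q=2^{|H|}$ fixed, and a $q$-dependent constant is absorbed into the $\O_H(\cdot)$ in Corollary~\ref{cor:hitting-acyclic}. Your sketch is a correct proof of that version.
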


From now on, we adopt the constant $\dEH$ in the notation.

\begin{lemma}\label{lem:hitting-acyclic}
Let $H$ be an acyclic simple digraph and let $T$ be a tournament such that $|T|\geq \dEH\cdot 2^{|H|}\sqrt{k}$. Then $T$ contains $k$ arc-disjoint subgraphs isomorphic to $H$.
\end{lemma}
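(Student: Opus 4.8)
The plan is to reduce to Lemma \ref{lem:EH} by first passing from arc-disjoint copies of $H$ to arc-disjoint cliques of a suitable size. Since $H$ is acyclic, it admits a topological ordering $v_1,\ldots,v_{|H|}$ of its vertices, so every arc of $H$ goes forward in this ordering. The key observation is that a tournament on $q := |H|$ vertices need \emph{not} contain $H$ as a subgraph (its own ordering might disagree with the one $H$ requires), so we cannot simply take $q = |H|$ and orient nothing; instead we blow up to $q = 2^{|H|}$ vertices, where the exponential comes from a Ramsey-type / pigeonhole argument forcing a transitive sub-tournament of size $|H|$. Concretely, I would first recall (or prove in one line) that every tournament on $2^{m}$ vertices contains a transitive sub-tournament on $m+1$ vertices; rounding, a tournament on at least $2^{|H|}$ vertices contains a transitive sub-tournament on $|H|$ vertices, and any transitive tournament on $|H|$ vertices contains $H$ as a subgraph, because we may map $v_i$ to the $i$-th vertex of the transitive order and every forward arc of $H$ is present.

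Next I would apply Lemma \ref{lem:EH} with $q = 2^{|H|}$: since $|T| \geq \dEH \cdot 2^{|H|}\sqrt{k} = \dEH \cdot q \sqrt{k}$, the complete undirected graph on the vertex set $V(T)$ contains $k$ pairwise arc-disjoint complete subgraphs $K^{(1)},\ldots,K^{(k)}$, each on $q = 2^{|H|}$ vertices. (Here ``arc-disjoint'' for the undirected cliques just means edge-disjoint; since $T$ is a tournament, each undirected edge of $K_{|T|}$ corresponds to exactly one arc of $T$, so edge-disjoint cliques pick out arc-disjoint sub-tournaments of $T$.) For each $i \in [k]$, the sub-tournament $T[V(K^{(i)})]$ has $2^{|H|}$ vertices, hence by the previous paragraph contains a transitive sub-tournament on $|H|$ vertices, and therefore contains a subgraph $H_i$ isomorphic to $H$.

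Finally I would check arc-disjointness of $H_1,\ldots,H_k$. Each $A(H_i)$ is a set of arcs of $T$ whose underlying undirected edges lie inside the clique $K^{(i)}$; since the cliques $K^{(1)},\ldots,K^{(k)}$ are pairwise edge-disjoint, the arc sets $A(H_i)$ are pairwise disjoint, so the copies of $H$ are pairwise arc-disjoint, as required. I do not expect any real obstacle here: the only nontrivial ingredient is the bound $2^{|H|}$ for forcing a transitive sub-tournament, which is classical (the greedy majority argument), and the rest is bookkeeping translating between edges of $K_{|T|}$ and arcs of $T$. One minor point to state carefully is that $H$ may have isolated vertices, but that is harmless since a transitive tournament on $|H|$ vertices has enough vertices to host them as well.
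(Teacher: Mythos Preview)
Your proposal is correct and follows essentially the same approach as the paper: apply Lemma~\ref{lem:EH} with $q=2^{|H|}$ to obtain $k$ edge-disjoint sub-tournaments of $T$ each on $2^{|H|}$ vertices, then use the classical fact that every tournament on $2^{|H|}$ vertices contains a transitive sub-tournament on $|H|$ vertices, which in turn contains the acyclic digraph $H$ as a subgraph. The paper's proof is more terse but identical in substance; your additional remarks on translating between undirected edges and arcs, and on isolated vertices, are fine but not needed for the argument.
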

\begin{proof}
 By Lemma~\ref{lem:EH}, in $T$ one can find $k$ arc-disjoint subtournaments $T_1,\ldots,T_k$, each on $2^{|H|}$ vertices. It is well-known that a tournament on $2^{|H|}$ vertices contains a transitive (i.e. acyclic) subtournament on $|H|$ vertices. As $H$ is acyclic, it is a subgraph of a transitive tournament on $|H|$ vertices. Hence, each of $T_1,\ldots,T_k$ contains a subgraph isomorphic to $H$, and these subgraphs are arc-disjoint. 
\end{proof}

%

\begin{corollary}\label{cor:hitting-acyclic}
Let $H$ be a simple digraph that is acyclic and let $k$ be a positive integer. Let $T$ be a tournament that does not contain $k$ arc-disjoint immersion copies of $H$. Then one can find in $T$ a set of at most $\dEH^2\cdot 4^{|H|} k$ arcs that is $H$-hitting.
\end{corollary}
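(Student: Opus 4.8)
The plan is to observe that the hypothesis forces $T$ to have very few vertices, and then to take as the $H$-hitting set simply the set of \emph{all} arcs of $T$. We may assume that $H$ has at least one arc, since otherwise no set of arcs can be $H$-hitting once $|T|\geq |H|$, so the statement is only meaningful in this regime.

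First, I would record the easy observation that any subgraph of $T$ isomorphic to $H$ is in particular an immersion copy of $H$ in $T$: map each vertex of $H$ to its counterpart and each arc $(u,v)\in A(H)$ to the length-one path consisting of the corresponding arc of the subgraph; these paths are pairwise arc-disjoint and saturate all arcs of the subgraph. Hence $k$ pairwise arc-disjoint subgraphs of $T$ isomorphic to $H$ would yield $k$ pairwise arc-disjoint immersion copies of $H$ in $T$. Since by assumption the latter do not exist, neither do the former, and taking the contrapositive of \Cref{lem:hitting-acyclic} (applicable because $H$ is acyclic) we conclude that
\[
  |T| \;<\; \dEH\cdot 2^{|H|}\sqrt{k}.
\]

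Second, since $T$ is a tournament it has exactly $\binom{|T|}{2}$ arcs, so
\[
  |A(T)| \;=\; \binom{|T|}{2} \;\leq\; \tfrac12\,|T|^2 \;<\; \tfrac12\,\dEH^2\cdot 4^{|H|}\,k \;\leq\; \dEH^2\cdot 4^{|H|}\,k.
\]
Finally I would set $F\coloneqq A(T)$. Then $T-F$ has no arcs, hence (as $H$ has an arc) it is trivially $H$-immersion-free, so $F$ is an $H$-hitting set of size at most $\dEH^2\cdot 4^{|H|}\,k$, as required.

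I do not expect a genuine obstacle here: the whole content is that the hypothesis caps $|T|$, after which deleting every arc is wasteful but already within budget (with a factor $2$ to spare). The only point needing a moment's care is that \Cref{lem:hitting-acyclic} (via \Cref{lem:EH}) produces honest subgraph copies of $H$ inside complete subtournaments, whereas the corollary's hypothesis only forbids arc-disjoint \emph{immersion} copies; this gap is bridged by the subgraph-is-an-immersion-copy observation above. One could alternatively bound $\ctw(T)$ through \Cref{cor:no-kH}, but that is both weaker and unnecessary — bounding $|T|$ directly is cleaner and gives the stated constant immediately.
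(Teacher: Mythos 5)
Your proof is correct and is essentially the paper's own argument: invoke \Cref{lem:hitting-acyclic} in contrapositive to get $|T|<\dEH\cdot 2^{|H|}\sqrt{k}$ and then take all of $A(T)$, whose size is at most $\binom{\dEH\cdot 2^{|H|}\sqrt{k}}{2}\leq \dEH^2\cdot 4^{|H|}k$, as the hitting set. The one small discrepancy is the corner case where $H$ has no arcs: the corollary as stated does cover it, and it is not that the statement is ``only meaningful'' otherwise --- rather, the hypothesis then forces $|T|<|H|$ (since repeating any $|H|$ vertices $k$ times would give $k$ trivially arc-disjoint immersion copies), so $T$ contains no copy of $H$ at all and the empty set is $H$-hitting; the paper dispatches this in one line and you should too rather than waving it away.
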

\begin{proof}
We first consider the corner case when $H$ does not contain any arc. Then $T$ must have less than $|H|$ vertices, for otherwise repeating any set of $|H|$ vertices $k$ times would yield $k$ arc-disjoint immersion copies of $H$. Therefore, $T$ in fact does not contain any immersion copy of $H$, due to having less vertices, and the empty set is $H$-hitting in $T$.

Hence, let us assume that $H$ contains at least one arc.
Observe that $|T|<\dEH\cdot 2^{|H|} \sqrt{k}$, for otherwise, by Lemma~\ref{lem:hitting-acyclic}, there would exist $k$ arc-disjoint immersion copies of $H$ in $T$.
Since $H$ has at least one arc, the set $A(T)$ of all the arcs of $T$ is $H$-hitting, and this set has size at most $\binom{\dEH\cdot 2^{|H|} \sqrt{k}}{2}\leq \dEH^2\cdot 4^{|H|} k$, as requested. 
\end{proof}

We now move to the case when $H$ is strongly connected. Recall that this case was already considered by Raymond~\cite{R18}, but we give a more refined argument that gives precise upper bounds on the bounding function. The proof relies on an strategy of finding a cut that separates the immersion copies of $H$ in a roughly balanced way, and applying induction to each side of the cut. This strategy has been applied before in the context of Erd\H{o}s-P\'osa properties, see e.g.~\cite{FominST11}.


\begin{lemma}\label{lem:hitting-strongly}
Let $H$ be a simple digraph that is strongly connected and contains at least one arc, and let $k$ be a positive integer. Let $T$ be a tournament that does not contain $k$ arc-disjoint immersion copies of $H$. Then one can find in $T$ a set of at most $6\dctw\|H\|^2 k^2$ arcs that is $H$-hitting.
\end{lemma}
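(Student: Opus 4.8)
The plan is to run a divide-and-conquer recursion on the cutwidth layout of $T$, splitting the tournament along a cut that separates a constant fraction of the arc-disjoint immersion copies of $H$ to each side, and recursing. Fix an ordering $\sigma$ of $V(T)$ witnessing $\ctw(T)$. By \Cref{cor:no-kH}, since $T$ has no $k$ arc-disjoint immersion copies of $H$, we have $\ctw(T)\le \dctw\|H\|^2k^2$; so every $\alpha$-cut $\cut[\alpha]$ has size at most $\dctw\|H\|^2k^2$. I would set up the recursion on pairs $(T,\kappa)$ where $T$ is a tournament with an ordering $\sigma$ of width at most $\dctw\|H\|^2\kappa^2$ and $T$ has no $\kappa$ arc-disjoint immersion copies of $H$, and prove by induction on $\kappa$ that such a $T$ has an $H$-hitting set of at most $6\dctw\|H\|^2\kappa^2$ arcs. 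The base case $\kappa=1$: if $T$ has no immersion copy of $H$ at all, the empty set works; this is the trivial instance one ends at.

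The inductive step is the heart of the argument. Consider the prefixes $\sigma[\alpha]$ for $\alpha=0,1,\dots,|V(T)|$. For each strongly connected immersion copy $\whH$ of $H$ in $T$, by \Cref{obs:comp-map} (applied with $C=H$) $\whH$ lies inside a single strong component of $T$, and by \Cref{obs:strong} $\whH$ contains a $\sigma$-backward arc; more usefully, $\whH$ is ``active'' at the cut $\cut[\alpha]$ for every $\alpha$ strictly between the first and last $\sigma$-positions of $V(\whH)$ — indeed, since $H$ is strongly connected, $\whH$ has a backward arc crossing every such $\alpha$ (take any $\alpha$ with a vertex of $\whH$ before and after it; strong connectivity forces an arc of $\whH$ from the later part to the earlier part across $\alpha$). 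Now take a maximum collection of arc-disjoint immersion copies of $H$ in $T$, of size $m\le \kappa-1$; if $m=0$ we are done as in the base case, so assume $m\ge 1$. As $\alpha$ increases from $0$ to $|V(T)|$, the number of these $m$ copies that are active at $\cut[\alpha]$ starts at $0$, ends at $0$, and changes by at most $1$ at each step (a copy becomes active/inactive only when $\alpha$ passes its first/last vertex, and distinct copies can share at most... — actually one must be slightly careful, but at worst the count changes by a bounded amount per step since copies are arc-disjoint). Hence there is a threshold $\alpha^\star$ such that the copies split into a ``left'' part $L$ (contained in $\sigma[\alpha^\star]$) and a ``right'' part $R$ (contained in $V(T)\setminus\sigma[\alpha^\star]$), with $|L|,|R|\le \lceil m/2\rceil$ and no copy straddling — more precisely, the copies active at $\alpha^\star$ are few (at most a constant, as they are arc-disjoint and all contain an arc of $\cut[\alpha^\star]$... but $\cut[\alpha^\star]$ may be large). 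Here I would instead argue: among all $\alpha$, pick $\alpha^\star$ so that at most $\lceil m/2\rceil$ copies lie entirely in $\sigma[\alpha^\star]$ and at most $\lceil m/2\rceil$ lie entirely in the complement. The tournament $T_L\coloneqq T[\sigma[\alpha^\star]]$ (with the induced ordering) has no $\lceil m/2\rceil+1\le\lceil(\kappa-1)/2\rceil+1$ arc-disjoint copies, hence by induction an $H$-hitting set $F_L$ of size $\le 6\dctw\|H\|^2(\lceil\kappa/2\rceil)^2$; similarly $F_R$ for $T_R\coloneqq T[V(T)\setminus\sigma[\alpha^\star]]$. Finally take $F\coloneqq F_L\cup F_R\cup \cut[\alpha^\star]$.

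I must check $F$ is $H$-hitting in $T$: any immersion copy $\whH$ of $H$, being strongly connected, either uses an arc of $\cut[\alpha^\star]$ (then it meets $\cut[\alpha^\star]$), or lies entirely in $T_L$ or entirely in $T_R$ (since a strongly connected subgraph whose vertex set meets both sides of the cut would need an arc crossing $\alpha^\star$ in each direction, hence a backward crossing arc, i.e. an arc of $\cut[\alpha^\star]$). In the latter cases it is hit by $F_L$ or $F_R$ respectively. So $F$ is $H$-hitting. The size bound: $|F|\le 2\cdot 6\dctw\|H\|^2(\lceil\kappa/2\rceil)^2 + \dctw\|H\|^2\kappa^2$. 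With $\kappa$ a power of the relevant even integer (or just absorbing constants) one gets $2\cdot 6\dctw\|H\|^2(\kappa/2)^2 = 3\dctw\|H\|^2\kappa^2$, so $|F|\le 3\dctw\|H\|^2\kappa^2+\dctw\|H\|^2\kappa^2 = 4\dctw\|H\|^2\kappa^2 \le 6\dctw\|H\|^2\kappa^2$, comfortably within budget (the slack absorbs the ceilings and the off-by-one in $m$ versus $\kappa-1$). Applying this with $\kappa=k$ yields the lemma.

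The main obstacle I anticipate is the bookkeeping around the threshold $\alpha^\star$: making precise that a well-chosen prefix cuts the packing roughly in half while guaranteeing that the few ``straddling'' copies are all killed by $\cut[\alpha^\star]$ (which follows from strong connectivity plus \Cref{obs:strong}, but needs to be stated carefully), and then verifying that the recursion parameter genuinely drops — from $m\le\kappa-1$ to roughly $\kappa/2$ on each side — so that the geometric-series constant $6$ suffices. Handling the ceilings and the assumption that $\dctw$ is the square of an even integer cleanly, so that the induced orderings on $T_L,T_R$ stay within the cutwidth bound for the smaller parameter, is the other delicate point; this is exactly why the excerpt remarked that $\dctw$ may be assumed to be a square of an even integer.
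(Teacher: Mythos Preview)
Your overall strategy --- induction on $k$, split along a prefix of a low-cutwidth ordering, recurse on both sides, add the cut --- is the same as the paper's. But the way you choose the split point has a genuine gap.

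You fix a \emph{maximum} packing of size $m\le k-1$ and then pick $\alpha^\star$ so that at most $\lceil m/2\rceil$ of \emph{those particular} copies lie in $T_L=T[\sigma[\alpha^\star]]$. From this you conclude that $T_L$ has no $\lceil m/2\rceil+1$ arc-disjoint copies of $H$. That inference is invalid: the restriction of a maximum packing of $T$ to $T_L$ need not be a maximum packing of $T_L$. All you know is that the number of copies in $T_L$ plus the number in $T_R$ is at most $m$ (since $T_L$ and $T_R$ are arc-disjoint), but nothing forces this sum to be split evenly at your chosen $\alpha^\star$. So the recursion parameter does not drop as claimed, and the induction does not go through. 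This is also why the ``induced orderings stay within the cutwidth bound for the smaller parameter'' issue you flagged is not just bookkeeping: without a correct bound on the number of copies in $T_L$ and $T_R$, you cannot invoke \Cref{cor:no-kH} with a smaller $k$, and the inherited ordering only has width $\le \dctw\|H\|^2k^2$, not $\dctw\|H\|^2(k/2)^2$.

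The fix, which is what the paper does, is to define the threshold directly in terms of the number of copies in prefixes rather than via a fixed packing: take $\alpha$ to be the \emph{largest} index such that $T[\sigma[\alpha]]$ does not contain $\lceil k/2\rceil$ arc-disjoint copies of $H$. Then by maximality $T[\sigma[\alpha+1]]$ does contain $\lceil k/2\rceil$ copies, so $T-\sigma[\alpha+1]$ cannot contain $\lfloor k/2\rfloor$ copies (else $T$ would contain $k$). Now induction applies cleanly to both sides with parameters $\lceil k/2\rceil$ and $\lfloor k/2\rfloor$, each strictly less than $k$. Because the two subtournaments $T[\sigma[\alpha]]$ and $T-\sigma[\alpha+1]$ miss the vertex at position $\alpha+1$, the paper adds \emph{two} cuts, $\cut[\alpha]\cup\cut[\alpha+1]$, to the hitting set (your single cut $\cut[\alpha^\star]$ would not suffice once the split is asymmetric like this). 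The arithmetic $6\big(\lceil k/2\rceil^2+\lfloor k/2\rfloor^2\big)+2k^2\le 6k^2$ then closes the induction.
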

\begin{proof}
 We prove the lemma by induction on $k$. For the base case $k=1$, $T$ does not contain any immersion copy of $H$, hence the empty set is $H$-hitting.
 
 Assume then that $k\geq 2$. By Corollary~\ref{cor:no-kH}, there is an ordering $\sigma$ of $V(T)$ of width at most $\dctw \|H\|^2 k^2$. Let $\alpha\in \{0,1,\ldots,|V(T)|\}$ be the largest index such that the tournament $T[\sigma[\alpha]]$ does not contain $\lceil k/2\rceil$ arc-disjoint immersion copies of $H$. Since $\lceil k/2\rceil<k$, by induction there exists a set of arcs $F_1$ of size at most $6\dctw\|H\|^2 \lceil k/2\rceil^2$ that is $H$-hitting in $T[\sigma[\alpha]]$. 
 
 If $\alpha=|V(T)|$, or equivalently $T[\sigma[\alpha]]=T$, then $F_1$ is in fact $H$-hitting in $T$ and we are done. Hence, we assume from now on that $\alpha<|V(T)|$. By the maximality of $\alpha$ we know that $T[\sigma[\alpha+1]]$ contains $\lceil k/2\rceil$ arc-disjoint immersion copies of $H$. It follows that the tournament $T-\sigma[\alpha+1]$ does not contain $\lfloor k/2\rfloor$ arc-disjoint immersion copies of $H$, for otherwise together we would expose $\lceil k/2\rceil+\lfloor k/2\rfloor=k$ arc-disjoint immersion copies of $H$ in $T$. By induction, there exists a set of arcs $F_2$ of size at most $6\dctw\|H\|^2 \lfloor k/2\rfloor^2$ that is $H$-hitting in $T-\sigma[\alpha+1]$.
 
 Let now
 $$F\coloneqq F_1\cup F_2\cup \cut[\alpha]\cup \cut[\alpha+1].$$
 Observe that $F$ is $H$-hitting in $T$. Indeed, since $H$ is strongly connected and has at least one arc, every immersion copy of $H$ in $T$ that in not entirely contained in $T[\sigma[\alpha]]$ or $T-\sigma[\alpha+1]$ is hit by $\cut[\alpha]\cup \cut[\alpha+1]$, whereas immersion copies entirely contained in $T[\sigma[\alpha]]$ and in $T-\sigma[\alpha+1]$ are hit by $F_1$ and $F_2$, respectively. It remains to estimate the size of $F$:
 \begin{eqnarray*}
  |F| & \leq & |F_1|+|F_2|+|\cut[\alpha]|+|\cut[\alpha+1]|\\\
      & \leq & 6\dctw\|H\|^2\left(\lceil k/2\rceil^2+\lfloor k/2\rfloor^2\right)+2\dctw\|H\|^2k^2\\
      & \leq & 6\dctw\|H\|^2\left(\left(\frac{k+1}{2}\right)^2+\left(\frac{k-1}{2}\right)^2\right)+2\dctw\|H\|^2k^2\\
      & =    & \dctw\|H\|^2\left(3(k^2+1)+2k^2\right) \leq 6\dctw\|H\|^2k^2.
 \end{eqnarray*}
 This concludes the inductive proof.
\end{proof}

Actually, in our later proof we will not be able to rely on Lemma~\ref{lem:hitting-strongly} for the following reason: we will need the copies to be vertex-disjoint, rather than arc-disjoint. The following statement is tailored to vertex-disjointness. 

\begin{lemma}\label{lem:hitting-strongly-vrtx}
Let $H$ be a simple digraph that is  strongly connected and contains at least one arc, and let $k$ be a positive integer. Further, let $T$ be a tournament with $\ctw(T)\leq c$ that does not contain $k$ vertex-disjoint immersion copies of $H$. Then one can find in $T$  a set of at most $2(k-1)c$ arcs that is $H$-hitting.
\end{lemma}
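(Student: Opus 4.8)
The plan is to fix, using the hypothesis $\ctw(T)\le c$, an ordering $\sigma$ of $V(T)$ of width at most $c$, and to exploit the fact that an immersion copy of a strongly connected digraph must itself be strongly connected. For an immersion copy $\whH$ of $H$ in $T$, write $\lambda(\whH)\coloneqq\min_{v\in V(\whH)}\sigma(v)$ and $\rho(\whH)\coloneqq\max_{v\in V(\whH)}\sigma(v)$; since $H$ has at least one arc, $\whH$ has at least two (distinctly positioned) vertices, so $\lambda(\whH)<\rho(\whH)$. The first step is the claim: for every integer $\gamma$ with $\lambda(\whH)\le\gamma<\rho(\whH)$, the cut $\cut[\gamma]$ contains an arc of $\whH$. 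For this, I would first observe that $\whH$ is strongly connected: following the paths $\phi(a)$ and, between images of vertices of $H$, using directed walks provided by strong connectivity of $H$, one obtains a directed walk in $\whH$ from any vertex to any other. Then the partition of $V(\whH)$ into $\{v:\sigma(v)\le\gamma\}$ (nonempty, containing the vertex at position $\lambda(\whH)$) and $\{v:\sigma(v)>\gamma\}$ (nonempty, containing the vertex at position $\rho(\whH)$) is crossed by an arc of $\whH$ directed from the latter set to the former, and such an arc belongs to $\cut[\gamma]$ by definition.

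Next I would reduce the covering task to an interval-stabbing problem. Consider the finite family $\mathcal{J}$ of integer intervals $[\lambda(\whH),\rho(\whH))$, ranging over all immersion copies $\whH$ of $H$ in $T$, and let $\Gamma$ be a minimum-size set of integers meeting every interval of $\mathcal{J}$ (so for every $\whH$ there is $\gamma\in\Gamma$ with $\lambda(\whH)\le\gamma<\rho(\whH)$). By the first step, $F\coloneqq\bigcup_{\gamma\in\Gamma}\cut[\gamma]$ is $H$-hitting, and $|F|\le\sum_{\gamma\in\Gamma}|\cut[\gamma]|\le|\Gamma|\cdot c$ because $\sigma$ has width at most $c$. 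So it remains to bound $|\Gamma|$, and by the classical interval-stabbing duality $|\Gamma|$ equals the maximum size $m$ of a pairwise disjoint subfamily of $\mathcal{J}$ (if $\mathcal{J}$ is empty then $\Gamma=\varnothing$ and there is nothing to prove).

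Finally I would turn $m$ pairwise disjoint intervals into roughly $m/2$ pairwise vertex-disjoint immersion copies. Take copies $\whH_1,\dots,\whH_m$ with the intervals $[\lambda(\whH_i),\rho(\whH_i))$ pairwise disjoint, indexed so that $\lambda(\whH_1)\le\lambda(\whH_2)\le\cdots$, which forces $\rho(\whH_i)\le\lambda(\whH_{i+1})$ for all $i$. For odd indices $i<j$ we then get $\rho(\whH_i)\le\lambda(\whH_{i+1})<\rho(\whH_{i+1})\le\lambda(\whH_{i+2})\le\lambda(\whH_j)$, so the closed position-intervals $[\lambda(\whH_i),\rho(\whH_i)]$ and $[\lambda(\whH_j),\rho(\whH_j)]$ are disjoint; as $\sigma$ assigns distinct positions to distinct vertices, $\whH_i$ and $\whH_j$ are vertex-disjoint. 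Hence $\whH_1,\whH_3,\dots$ form $\lceil m/2\rceil$ pairwise vertex-disjoint immersion copies of $H$, so by the hypothesis $\lceil m/2\rceil\le k-1$, giving $m\le 2(k-1)$ and therefore $|F|\le 2(k-1)c$, as required.

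The one genuinely delicate point is the factor $2$: an immersion copy need not occupy a prefix or suffix of the ordering, only to \emph{span} an interval, and two copies spanning disjoint half-open intervals may still share the single vertex sitting at the common endpoint. This is exactly what the "take every second interval" trick repairs, at the cost of the factor $2$ in the bound. Everything else — strong connectivity of immersion copies of strongly connected digraphs, and the stabbing-number$=$maximum-disjoint-family duality for intervals — is standard.
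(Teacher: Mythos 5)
Your proof is correct, and it takes a genuinely different route from the one in the paper. The paper argues by induction on $k$: it picks the minimum $\alpha$ such that the prefix $T[\sigma[\alpha]]$ contains a copy $\whH$ of $H$, removes $\cut[\alpha]$ together with the backward arcs with tail at position $\alpha$ (a set of size at most $2c$, since the latter arcs lie in $\cut[\alpha-1]$), and recurses on $T-\sigma[\alpha]$, using Observation~\ref{obs:comp-map} to confine any surviving copy to one side of the cut and Observation~\ref{obs:strong} to kill copies inside the prefix. You instead give a non-inductive, duality-style argument: project each copy $\whH$ to the half-open interval $[\lambda(\whH),\rho(\whH))$ of $\sigma$-positions it spans, observe (via strong connectivity of $\whH$, which you correctly derive from that of $H$) that every cut at a point of this interval meets an arc of $\whH$, stab all such intervals with a minimum set $\Gamma$ of points, and bound $|\Gamma|$ by the maximum number $m$ of pairwise disjoint intervals; the every-second-interval trick then converts $m$ disjoint half-open spans into $\lceil m/2\rceil$ pairwise vertex-disjoint copies, so $m\le 2(k-1)$. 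All the steps check out, including the delicate point you flag about shared interval endpoints, and both arguments land on the same bound $2(k-1)c$ with the factor $2$ arising from analogous sources (two adjacent cuts in the paper, alternating intervals in yours). Your version has the advantage of avoiding induction and Observation~\ref{obs:strong} entirely, needing only strong connectivity of immersion copies plus the classical stabbing--packing duality for intervals; the paper's inductive template has the advantage of being reused almost verbatim for the balanced-splitting arguments in Lemmas~\ref{lem:hitting-strongly} and~\ref{lem:top-hitting-strongly}.
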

\begin{proof}
We proceed by induction on $k$. Let $\sigma$ be an ordering of $T$ of width at most $c$. If $T$ does not contain any copy of $H$, then the empty set is $H$-hitting. This proves the base case $k=1$, so from now on we may assume that $k\geq 2$ and that $T$ contains at least one immersion copy of $H$.

Let $\alpha$ be the minimum integer satisfying the following: $T[\sigma[\alpha]]$ contains an immersion copy $\whH$ of~$H$. Let $B_1\coloneqq\{(u,v)\in\barcs_\sigma(T)\mid \sigma(u)=\alpha\}$ be the set of backward arcs with tail $\alpha$ 
and let $B\coloneqq B_1\cup\cut[\alpha]$. As $B_1\subseteq \cut[\alpha-1]$, we have $|B|\leq 2c$.

Observe that in $T'\coloneqq T[V(T)\setminus\sigma[\alpha]]$ one cannot find a family of $k-1$ vertex-disjoint immersion copies of $H$. Indeed, if there was such a family, then adding $\whH$ to it would yield a family of $k$ vertex-disjoint copies of $H$ in $T$, a contradiction. Hence, by induction hypothesis, in $T'$ there is a set $S$ of at most $2(k-2)c$ arcs that is $H$-hitting. We claim that the set $B\cup S$ is $H$-hitting in $T$. Note that since $|B\cup S|\leq |B|+|S|\leq 2(k-1)c$, this will conclude the proof.

Indeed, suppose that $\whH'$ is an immersion copy of $H$ in $T-(B\cup S)$. By Observation~\ref{obs:comp-map}, either $V(\whH')\subseteq V\setminus\sigma[\alpha]$, or $V(\whH')\subseteq \sigma[\alpha]$. The first case is impossible, because every immersion copy of $H$ in $T'$ contains an arc from $S$. On the other hand, if $V(\whH')\subseteq \sigma[\alpha]$, then by the minimality of $\alpha$ we infer that $\sigma^{-1}(\alpha)\in V(\whH')$. Then Observation~\ref{obs:strong} implies that $\whH'$ needs to contain an arc of $B_1$, again a contradiction.
\end{proof}

Note that by combining Lemma~\ref{lem:hitting-strongly-vrtx} with Corollary~\ref{cor:no-kH}, we obtain a statement analogous to Lemma~\ref{lem:hitting-strongly}, however with a bound of $\O(k^3)$ instead of $\O(k^2)$. This drawback will accordingly affect the final dependency on $k$ in Theorem~\ref{thm:main-imm}.

We now proceed to the main part of the proof, which concerns digraphs that are not acyclic and that are not necessarily strongly connected. 

\def\bb{b}
\def\dd{d}

\begin{lemma}\label{lem:hitting-weakly}
Let $H$ be a simple digraph that is not acyclic and let $k$ be a positive integer. Let $T$ be a tournament that does not contain $k$ arc-disjoint immersion copies of $H$. Then one can find in $T$ a set consisting of at most $2\dctw^{3/2}\cdot|H|!\cdot |H|\cdot \|H\|^3\cdot k^3$ arcs that is $H$-hitting.
\end{lemma}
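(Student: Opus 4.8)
The strategy is to reduce the general (non-acyclic, non-strongly-connected) case to the two cases we already control: the acyclic case (Corollary~\ref{cor:hitting-acyclic}) and the strongly connected case (Lemma~\ref{lem:hitting-strongly-vrtx}). First I would invoke Corollary~\ref{cor:no-kH} to fix an ordering $\sigma$ of $V(T)$ of width $c\in\O(\|H\|^2k^2)$; this gives us a linear layout to work with. The key structural observation is that in any immersion copy $\whH$ of $H$ in $T$, by Observation~\ref{obs:comp-map} each strong component $C$ of $H$ maps into a single strong component of $T$, and since $H$ is not acyclic at least one such component is non-trivial (has an arc). Moreover, the strong components of $H$ inherit a partial order from $H$, and any immersion embedding induces a linear order on their images that extends some topological ordering $\pi\in\Pi(H)$ of the strong components of $H$, and is moreover consistent with $\sigma$ in the sense that the $\sigma$-intervals occupied by consecutive components are ordered. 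Since there are at most $|H|!$ topological orderings of the strong components, it suffices to produce, for each fixed $\pi$, an $H$-hitting set of size $\O(|H|\cdot\|H\|^3k^3/|H|!\cdot|H|!)$-ish meeting all copies that respect $\pi$, and then take the union over all $\pi$; this is where the $|H|!$ factor in the bound comes from.

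Second, fix a topological ordering $\pi$ of the strong components $C_1,\dots,C_m$ of $H$ (indexed so $\pi(C_i)=i$). I would try to find $k$ arc-disjoint (indeed, we can aim for vertex-disjoint on the non-trivial parts) copies of $H$ respecting $\pi$, or else a small hitting set, by a greedy/peeling argument along $\sigma$. The idea: scan $\sigma$ from left to right and, whenever we can, carve off a minimal $\sigma$-prefix that already contains a copy of $H$ respecting $\pi$; record that copy, delete the relevant cut arcs (bounded by $c$ per deletion) together with the backward arcs incident to the last vertex of the prefix (to handle the non-trivial last component via Observation~\ref{obs:strong}), and recurse on the remainder. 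For the non-trivial components individually we can call on Lemma~\ref{lem:hitting-strongly-vrtx}: a tournament of cutwidth $\le c$ with no $k$ vertex-disjoint copies of a strongly connected $H'$ has an $H'$-hitting set of size $\le 2(k-1)c$. The plan is to combine: either each non-trivial component $C_i$ can be packed $k$ times vertex-disjointly in a suitable "window" of $\sigma$, and the windows for $C_1,\dots,C_m$ line up (because $\pi$ is a topological order and $\sigma$ is a layout), so that composing them together with the acyclic "glue" between consecutive components (handled by Corollary~\ref{cor:hitting-acyclic}, since between two consecutive strong components all arcs of $H$ lie in an acyclic "boundary" configuration and the relevant sub-tournament of $T$, if large, contains many arc-disjoint copies of the acyclic part) yields $k$ arc-disjoint copies of all of $H$; or else one of these sub-packings fails, yielding a hitting set of size $\O(c\cdot k)=\O(\|H\|^2k^3)$ for all $\pi$-respecting copies, multiplied by the $O(\|H\|)$-many components and the $O(4^{|H|})$-type acyclic bound, which is absorbed into the $|H|!\cdot|H|\cdot\|H\|^3$ constant.

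Third, to make the composition precise I would define, for a fixed $\pi$, the notion of a copy of $H$ respecting $\pi$ as one whose strong-component images are ordered by $\sigma$ in the order $C_1,\dots,C_m$, and then partition $\sigma$ into at most $m$ consecutive "slots," one per component, plus "gaps" between them. A copy respecting $\pi$ that is "aligned" with this partition decomposes into: an immersion copy of the non-trivial $C_i$ inside slot $i$, and an embedding of the (acyclic, by construction) inter-component arcs of $H$ into the gaps. Packing $k$ such aligned copies arc-disjointly reduces to packing $k$ copies of each $C_i$ vertex-disjointly (Lemma~\ref{lem:hitting-strongly-vrtx}) and $k$ copies of each acyclic "gap graph" arc-disjointly (Corollary~\ref{cor:hitting-acyclic}); if all succeed we can splice them, and if any fails we get the hitting set. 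The number of ways to choose the slot boundaries is polynomial, so we can afford to try all alignments, or better, argue that one canonical alignment (pushing each component as far left as possible) suffices by minimality.

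\textbf{Main obstacle.} The hard part will be the splicing/composition step: showing that vertex-disjoint packings of the individual strong components in their slots, together with arc-disjoint packings of the acyclic connector graphs in the gaps, can be assembled into $k$ genuinely arc-disjoint immersion copies of all of $H$ respecting $\pi$ — one must be careful that the connector paths do not reuse arcs inside the slots and that the cut arcs $\cut[\alpha]$ separating slots, which every $\pi$-respecting copy must traverse, are correctly accounted for and do not blow up the count; in particular the connector paths between consecutive components run through these cuts, so the $c$-sized cuts have to be charged carefully (this is why the bound is cubic rather than quadratic, matching the remark after Lemma~\ref{lem:hitting-strongly-vrtx}). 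A secondary subtlety is rigorously justifying that every immersion copy of $H$ respects \emph{some} topological ordering $\pi$ compatible with $\sigma$ — i.e. that Observation~\ref{obs:comp-map} combined with the acyclicity of the condensation of $H$ forces the component images to be $\sigma$-ordered consistently — and dealing with trivial (single-vertex) components of $H$, which contribute no arc and so are not constrained by Observation~\ref{obs:strong}; these are handled by folding them into the acyclic connector graphs.
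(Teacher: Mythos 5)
Your skeleton matches the paper's: fix a layout $\sigma$ of width $c\in\O(\|H\|^2k^2)$ via Corollary~\ref{cor:no-kH}, enumerate the at most $|H|!$ topological orderings $\pi$ of the strong components, for each $\pi$ greedily carve consecutive $\sigma$-windows (one per component), pack each non-trivial component inside its window using Lemma~\ref{lem:hitting-strongly-vrtx} or else extract a hitting set, and take the union over all $\pi$. However, you correctly identify the splicing step as the main obstacle and then do not resolve it, and this is precisely where the actual content of the proof lies. The difficulty is not merely bookkeeping of connector paths: if you pack only $k$ vertex-disjoint copies of each component $C_{\pi,i}$ in its window, there is no way to match them up into $k$ copies of $H$, because an inter-component arc $(u,v)$ of $H$ must be realized by an arc of $T$ from the chosen copy containing $\phi(u)$ to the chosen copy containing $\phi(v)$, and that tournament arc may be backward (i.e., point the wrong way), forcing a detour that destroys arc-disjointness. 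The paper's fix is to \emph{oversample}: it packs $s=h\sqrt{c}$ (not $k$) vertex-disjoint copies of each component per window, builds a conflict graph whose edges record backward arcs between copies in different windows (at most $c=s^2/h^2$ per pair of windows, since such arcs lie in a single cut), and then uses a probabilistic shifting argument (Claim~\ref{clm:technical-new}) to extract $s/2\geq k$ pairwise disjoint conflict-free transversals; conflict-freeness guarantees every inter-component arc of $H$ is realized by a single forward arc of $T$, so no paths and no sharing. Your proposal, which packs only $k$ copies per component and tries to glue via separately packed ``acyclic gap graphs,'' does not supply this mechanism; moreover there is nothing to embed in the gaps, since the endpoints of inter-component arcs are vertices of the component copies themselves.

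A second gap is the hitting direction. You assert that if some sub-packing fails for every $\pi$ then the union of the per-component hitting sets and cuts hits all copies, but a copy of $H$ in $T-S$ need not align its component images with the greedily chosen windows $I_{\pi,i}$, so this requires proof. The paper's Claim~\ref{clm:hitting} handles this with a nontrivial argument: after deleting $S$ all backward arcs between the ``base intervals'' are gone, so by Observations~\ref{obs:comp-map} and~\ref{obs:strong} every non-trivial component image of a hypothetical copy $\whH$ sits inside a single base interval; one then chooses the topological ordering $\pi$ induced by the copy, compares the two interval partitions $\{J_i\}$ and $\{L_i\}$, and locates a specific window whose hitting set $A_{\pi,\pi(C)}$ must have destroyed the component image placed there, yielding a contradiction. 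Your proposal gestures at the first half of this (component images are $\sigma$-ordered consistently with some $\pi$) but contains no argument for why the particular greedy windows for that $\pi$ catch the copy. As written, the proposal is a plausible plan with the two hardest steps left open, so it does not constitute a proof.
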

\begin{proof}
Let $\Comps$ be the family of all strong components of $H$ and let $h\coloneqq |\Comps|$. Since $H$ is not acyclic, $\Comps$ contains at least one strong component $C$ that is {\emph{non-trivial}}, that is, $|C|>1$. In particular, $h\leq |H|-1$. 
Further, let $\Pi$ be the set of all {\emph{topological orderings}} of the strong components of $H$; that is, the elements of $\Pi$ are orderings $\pi\colon \Comps\to [|\Comps|]$ such that for every arc of $H$ with tail in $C\in \Comps$ and head in $D\in \Comps$, we have $\pi(C)\leq \pi(D)$. It is well-known that $\Pi\neq \varnothing$. Also, note that $|\Pi|\leq h!\leq (|H|-1)!$.

Let $T=(V,E)$. By Corollary~\ref{cor:no-kH}, there is an ordering $\sigma$ of vertices of $T$ of width at most $c$, where
$$c\coloneqq \dctw\|H\|^2k^2.$$
We also define
$$s\coloneqq \sqrt{\dctw}\cdot h\|H\|k.$$
Note that thus, $s=h\sqrt{c}$ and $s$ is an even integer, because we assume $\dctw$ to be a square of an even integer.

Let $\I$ be the set of all $\sigma$-intervals. For $I\in\I$, we define 
$$\cut^-(I)\coloneqq\cut[\first_\sigma(I)]\qquad\textrm{and}\qquad\cut^+(I)\coloneqq \cut[\last_\sigma(I)].$$
We define functions 
$$I\colon \Comps\times[|V|]\to \I\qquad\textrm{and}\qquad A,B\colon \Comps\times[|V|]\to \Pow\left(\barcs_\sigma(T)\right),$$ where $\Pow(X)$ denotes the power set of $X$, as follows:
\begin{itemize}[nosep]
\item $I(C,\alpha)$ is the inclusion-wise minimal $\sigma$-interval $I$ such that $\first_\sigma(I)=\alpha$ and $T[I]$ contains at least $s$ vertex-disjoint immersion copies of $C$. If no such interval exists, we set $I(C,\alpha)\coloneqq \sigma(\alpha,|V|]$. Note that either way, $T[I]$ does not contain $s+1$ vertex-disjoint immersion copies of $C$.
\item If $C$ is trivial, then $A(C,\alpha)$ is the set of all backward arcs contained in $T[I(C,\alpha)]$. If $C$ is non-trivial, then $A(C,\alpha)$ is a set of arcs that is $C$-hitting in $T[I(C,\alpha)]$ and is of size at most $2sc$, whose existence follows from Lemma~\ref{lem:hitting-strongly-vrtx}.
\item $B(C,\alpha)\coloneqq \cut^+(I(C,\alpha))$.
\end{itemize}
Note that if $C$ is trivial, then $|I(C,\alpha)|\leq s$. This implies that $|A(C,\alpha)|\leq \binom{s}{2}\leq 2sc$. Hence, in all cases we have 
$$|A(C,\alpha)|\leq 2sc\qquad\textrm{and}\qquad |B(C,\alpha)|\leq c.$$


Consider an arbitrary topological ordering $\pi\in\Pi$. We define indices $\alpha_0,\alpha_1,\ldots,\alpha_h$ and intervals $I_{\pi,1},I_{\pi,2},\ldots,I_{\pi,h}$ by induction as follows: $\alpha_0\coloneqq 0$ and, for $i=1,2,\ldots,h$, we set
\[I_{\pi,i}\coloneqq I(\pi^{-1}(i),\alpha_{i-1})\qquad\textrm{and}\qquad \alpha_i\coloneqq \last_\sigma(I_{\pi,i}),\]
where if needed we put $\last_\sigma(\varnothing)=|V|$. Moreover, for $i\in [h]$ we define 
$$A_{\pi,i}\coloneqq A(\pi^{-1}(i),\alpha_{i-1})\qquad\textrm{and}\qquad B_{\pi,i}\coloneqq B(\pi^{-1}(i),\alpha_{i-1}).$$

Our next step is to show that if for some $\pi\in \Pi$, each interval $I_{\pi,i}$ contains $s$ vertex-disjoint immersion copies of $H$, then we get a contradiction: there are $k$ vertex-disjoint immersion copies of $H$ in $T$.
For this, we will use the following auxiliary statement.

\begin{claim}\label{clm:technical-new}
Let $G$ be a graph with vertex set partitioned into disjoint sets $V_1,\ldots,V_h$, each of size $s$. Suppose that for each pair of indices $1\leq i<j\leq h$, there are at most $\frac{s^2}{h^2}$ edges with one endpoint in $V_i$ and second in $V_j$. Then one can find $s/2$ pairwise disjoint independent sets $I_1,\ldots,I_{s/2}$ in $G$ such that each independent set $I_t$, $t\in [s/2]$, contains exactly one vertex from each set $V_i$, $i\in [h]$.
\end{claim}
\begin{clproof}
 For each $i\in [h]$ let us arbitrarily enumerate the vertices of $V_i$ as $v_{i}[0],\ldots,v_{i}[s-1]$. Consider the following random experiment: draw independently and uniformly at random numbers $t,a_1,\ldots,a_h$ from $\{0,1,\ldots,s-1\}$, and let
 $$I\coloneqq \{v_i[(t+a_i)\bmod s]\colon i\in [h]\}.$$
 Note that for each fixed pair of indices $1\leq i<j\leq h$, the probability that there is an edge between vertices $v_i[(t+a_i)\bmod s]$ and $v_j[(t+a_j)\bmod s]$ is bounded by $\frac{1}{h^2}$. By the union bound we infer that $I$ is an independent set with probability at least $\frac{1}{2}$. Hence, there is a choice of $\hat{a}_1,\ldots,\hat{a}_h\in \{0,1,\ldots,s-1\}$ such that conditioned on $a_1=\hat{a}_1,\ldots,a_h=\hat{a}_h$, the probability (over the choice of $t$) that $I$ is an independent set is at least $\frac{1}{2}$. In other words, for at least $s/2$ choices of $t$, the set $\{v_i[(t+\hat{a}_i)\bmod s]\colon i\in [h]\}$ is independent. This gives us the desired family of $s/2$ pairwise disjoint independent sets.
\end{clproof}

\begin{claim}\label{clm:packing}
Suppose that there exists $\pi\in\Pi$ such that for every $i\in [h]$, the tournament $T[I_{\pi,i}]$ contains $s$ vertex-disjoint immersion copies of $\pi^{-1}(i)$. Then $T$ contains $k$ vertex-disjoint immersion copies of $H$.
\end{claim}
\begin{clproof}
Denote $C_{\pi,i}\coloneqq \pi^{-1}(i)$. For each $i\in[h]$, let $\C_i$ be the family of $s$ vertex-disjoint immersion copies of $C_{\pi,i}$ contained in $T[I_{\pi,i}]$.

Let $G$ be a graph on vertex set $\C_1\cup \ldots \cup\C_p$ where for each pair of indices $1\leq i<j\leq h$ and pair of immersion copies $Q\in \C_i$ and $R\in \C_j$, we put an edge if and only if in $T$ there is arc with tail in $R$ and head in $Q$. Note that such an arc is backward in $\sigma$ and belongs to $\cut[\alpha_i]$. Hence, for every pair of indices $i,j$  as above, $G$ contains at most $c$ edges with one endpoint in $\C_i$ and second in $\C_j$. 

Noting that $c=\frac{s^2}{h^2}$, we may apply Claim~\ref{clm:technical-new} to conclude that $G$ contains $s/2$ pairwise independent sets, each consisting of one element from each of the families $\C_1,\ldots,\C_h$. As $s/2\geq k$, let $I_1,\ldots,I_k$ be any $k$ of those independent sets. Now, for each $t\in [k]$, we may construct an immersion copy of $H$ contained in $T[\bigcup_{Q\in I_t} V(Q)]$ as follows: take the union of subgraphs $Q\in I_t$, which are immersion copies of $C_{\pi,1},\ldots,C_{\pi,t}$, respectively, and for each arc $(a,b)$ of $H$ that is not contained in any of $C_{\pi,1},\ldots,C_{\pi,t}$, say $a\in V(C_{\pi,i})$ and $b\in V(C_{\pi,j})$ where we necessarily have $i<j$, map $(a,b)$ to the single edge between the corresponding two vertices from the copies of $C_{\pi,i}$ and $C_{\pi,j}$ in $I_t$. Note that this edge is oriented forward in $\sigma$, because $I_t$ is an independent set in $G$ (a backward arc would have generated an edge in $G$). Thus, we have constructed $k$ vertex-disjoint copies of $H$ in $T$.
\end{clproof}

If the assumption of Claim~\ref{clm:packing} holds, then we immediately obtain a contradicion and the proof is finished. Therefore, we may further assume that for every $\pi\in\Pi(H)$ there exists $j\in [h]$ such that $I_{\pi,j}$ contains less than $s$ vertex-disjoint copies of $\pi^{-1}(j)$. Observe that this implies that $\last_\sigma(I_{\pi,j})=|V|$, hence in particular we have
$$\bigcup_{i=1}^h I_{\pi,i}=V(T)\qquad\textrm{for each }\pi\in \Pi.$$
Let 
\[S\coloneqq \bigcup_{\pi\in\Pi}\,\bigcup_{i=1}^h\, A_{\pi,i}\cup B_{\pi,i}.\]
Observe that
\begin{eqnarray*}
|S|& \leq & |\Pi|\cdot |H|\cdot (2sc+c)\\
& \leq & (|H|-1)!\cdot |H|\cdot (2s+1)c\\
& = & |H|!\cdot (2\sqrt{\dctw}\cdot h\|H\|k+1)\cdot \dctw\|H\|^2k^2\\
& \leq &  2\dctw^{3/2}\cdot|H|!\cdot |H|\cdot \|H\|^3\cdot k^3,
\end{eqnarray*}
so to finish the proof it suffices to show that $S$ is $H$-hitting in $T$. Let $T'\coloneqq T-S$. 

\begin{figure}[ht]
\begin{center}
\includegraphics[scale=1.2]{picture-1.mps}
\end{center}
\caption{Objects defined in the proof of Claim~\ref{clm:hitting} with $h=10$, $m=4$, $n=3$.}
\label{figura}
\end{figure}

\begin{claim}\label{clm:hitting}
$T'$ is $H$-immersion-free.
\end{claim}
\begin{clproof}
Let $\Bb$ be the family of all inclusion-wise maximal $\sigma$-intervals $B$ satisfying the following property: for every $\pi\in\Pi$ and $i\in [h]$, either $B\subseteq I_{\pi,i}$ or $B\cap I_{\pi,i}=\varnothing$. Call elements of $\Bb$ \emph{base intervals} and observe that $\Bb$ is a partition of $V(T')$. Let $\Ff$ be the family of all $\sigma$-intervals which are disjoint unions of collections of base intervals.  For two disjoint intervals $J,J'\in \Ff$, we write $J<J'$ if $\last_\sigma(J)\leq\first_\sigma(J')$.

Suppose for contradiction that $T'$ contains an immersion model $\whH$ of $H$. We fix some immersion embedding of $H$ in $\whH$, to which we will implicitly refer when considering subgraphs $\whH|_C$ for $C\in \Comps$.

Note that in $T'$ there are no backward arcs with endpoints in different intervals from $\Bb$, as $B_{\pi,i}\subseteq S$ for every $\pi\in \Pi$ and $i\in [h]$. Hence, every non-trivial strongly connected subgraph of $T'$ must have all vertices contained in a single base interval. In particular, from Observation~\ref{obs:comp-map} we infer that for every non-trivial strong component $C\in \Comps$, the subgraph $\whH|_C$ has all its vertices contained in a single base interval. Note that this conclusion also holds trivially when $C$ is trivial.

Let $B_1<B_2<\ldots<B_m$ be all the base intervals containing subgraphs $\whH|_C$ for non-trivial components $C\in \Comps$. Consider any partition of $V(T)$ into intervals $J_1,J_2,\ldots,J_m\in \Ff$ such that $B_i\subseteq J_i$ for each $i\in [m]$. Note that this implies that $J_1<J_2<\ldots<J_m$.
For each $i\in [m]$, let $\Comps_i\subseteq \Comps$ be the set of all (including trivial) components $C\in \Comps$ such that $V(\whH|_C)\subseteq J_i$. Note that $\{\Comps_i\colon i\in [m]\}$ is a partition of $\Comps$ and each family $\Comps_i$ contains at least one non-trivial component.

Observe that if $C\in \Comps_i$ and $C'\in \Comps_{i'}$, where $i\neq i'$, and in $H$ there is an arc $(u,v)$ with $u\in V(C)$ and $v\in V(C')$, then we necessarily have $i<i'$. Indeed, the image of $(u,v)$ in the immersion embedding is a path in $T'$ that starts in $J_i$ and ends in $J_{i'}$, while in $T'$ arcs with endpoints in different intervals among $\{J_1,\ldots,J_m\}$ always point from an interval with a smaller index to an interval with a higher index.
Therefore, there exists a topological ordering $\pi\in \Pi$ such that for all $i,i'\in [m]$ satisfying $i<i'$, all the components of $\Comps_i$ appear in $\pi$ before all the components of $\Comps_{i'}$. In other words, there exist integers $0=t_0<t_1<t_2<\ldots<t_m=h$ such that for each $i\in [m]$, we have $\pi(\Comps_i)=(t_{i-1},t_i]\cap \mathbb{Z}$ (cf. Figure~\ref{figura}). For every $i\in [m]$, we define
$$L_{i}\coloneqq \bigcup_{C\in\Comps_i}I_{\pi,\pi(C)}.$$
Note that $L_i$ is a $\sigma$-interval belonging to $\Ff$, because the set $\Comps_i$ is contiguous in the ordering $\pi$. Furthermore $\{L_i\colon i\in [m]\}$ is a partition of $V(T)$ and $L_1<L_2<\ldots<L_m$.

Recalling that both $\{J_i\colon i\in [m]\}$ and $\{L_i\colon i\in [m]\}$ are partitions of $V(T)$, we can define $n$ to be the smallest positive integer satisfying $\bigcup_{i=1}^n J_i\subseteq \bigcup_{i=1}^n L_i$. By the minimality of $n$, we have $J_n\subseteq L_n$.

Recall that $B_n\subseteq J_n\subseteq L_n$ and $B_n$ is a base interval. Therefore, there exists $C\in\Comps_n$ such that $B_n\subseteq I_{\pi,\pi(C)}$. If $C$ is non-trivial, then the set of arcs $A_{\pi,\pi(C)}$ is $C$-hitting in $T[I_{\pi,\pi(C)}]$. This implies that $T'[I_{\pi,\pi(C)}]$ is $C$-immersion-free, and so is its subgraph $T'[B_n]$. However, $B_n$ is the only interval among $\{B_1,\ldots,B_m\}$ that is contained in $I_{\pi,\pi(C)}$, hence $C$ being a non-trivial component from $\Comps_n$ implies that $V(\whH|_C)\subseteq B_n$; a contradiction. If $C$ is trivial, then $T[I_{\pi,\pi(C)}]-A_{\pi,\pi(C)}$ is acyclic, hence $T'[I_{\pi,\pi(C)}]$ is $C'$-immersion-free for every non-trivial component $C'\in \Comps_n$. Since there exists such a non-trivial component $C'$ and it again satisfies $V(\whH_{C'})\subseteq B_n\subseteq I_{\pi,\pi(C)}$, we again obtain a contradiction.
\end{clproof}

As argued, Claim~\ref{clm:hitting} finishes the proof of Lemma~\ref{lem:hitting-weakly}.
\end{proof}

With Lemma~\ref{lem:hitting-weakly} in place, we can finish the proof of Theorem~\ref{thm:main-imm}.

\begin{proof}[of Theorem~\ref{thm:main-imm}]
 If $H$ has no arcs, then the statement holds trivially for bounding function $f(k)=0$. 
 Hence, from now on assume that $H$ has at least one arc.
Suppose $T$ is a tournament that does not contain $k$ arc-disjoint immersion copies of $H$. If $H$ is acyclic, then, by Corollary~\ref{cor:hitting-acyclic}, we may find in $T$ a set of at most $\dEH^2\cdot 4^{|H|} k\in\O_H(k)$ arcs that is $H$-hitting. On the other hand, if $H$ is not acyclic, then by Lemma~\ref{lem:hitting-weakly} we may find in $T$ an $H$-hitting set of arcs of size at most $2\dctw^{3/2}\cdot|H|!\cdot |H|\cdot \|H\|^3\cdot k^3\in\O_H(k^3)$.
\end{proof}

\section{Erdős-Pósa property for topological minors}

In this section we prove Theorem~\ref{thm:main-topminors}. The proof follows similar ideas to the ones presented in the previous section, only adjusted to the setting of interval decompositions. Throughout this section, 
the notions of a \emph{copy} and of \emph{hitting} will refer to topological minor~copies. Let us fix the constant $\dpw$ hidden in the $\O(\cdot)$-notation in Theorem~\ref{thm:pw-bound} and note that the constant hidden in the $\O(\cdot)$-notation in Corollary~\ref{cor:top-no-kH} is also equal to $\dpw$.

Consider first the acyclic case. The following statements are analogues of Lemma~\ref{lem:hitting-acyclic} and Corollary~\ref{cor:hitting-acyclic}.

\begin{lemma}\label{lem:top-hitting-acyclic}
 Let $H$ be an acyclic simple digraph let $T$ be a tournament such that $|T|\geq 2^{|H|} k$. Then $T$ contains $k$ vertex-disjoint subgraphs isomorphic to $H$.
\end{lemma}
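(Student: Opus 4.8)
The plan is to mimic the acyclic immersion argument (Lemma~\ref{lem:hitting-acyclic}) but with the correct quantitative tool for the vertex-disjoint topological-minor setting. Since we are packing \emph{vertex}-disjoint copies rather than arc-disjoint ones, we do not need the Erd\H{o}s--Hanani machinery: we can simply carve $T$ into $k$ disjoint subtournaments and find a copy of $H$ inside each.

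First I would recall the classical fact that every tournament on at least $2^{r}$ vertices contains a transitive (acyclic) subtournament on $r$ vertices; this is proved by the standard induction on $r$, repeatedly following a majority out- or in-neighbourhood. Then, given a tournament $T$ with $|T|\geq 2^{|H|}k$, partition its vertex set arbitrarily into $k$ blocks $V_1,\dots,V_k$, each of size at least $2^{|H|}$ (this is possible since $|T|\geq 2^{|H|}k$), and let $T_i\coloneqq T[V_i]$. By the fact above, each $T_i$ contains a transitive subtournament on $|H|$ vertices. Since a transitive tournament on $r$ vertices contains \emph{every} acyclic digraph on at most $r$ vertices as a subgraph — order the $r$ vertices consistently with the transitive order and with any topological order of $H$ — each $T_i$ contains a subgraph isomorphic to $H$. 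Because the $V_i$ are pairwise disjoint, these $k$ copies are vertex-disjoint, and in particular each is (trivially) a topological minor copy of $H$ with all branch paths of length one.

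I expect no real obstacle here: the only things to be a little careful about are (a) stating precisely why a transitive tournament on $|H|$ vertices contains $H$ as a subgraph (choose an isomorphism carrying a topological order of $H$ to the linear order of the transitive tournament, so that all arcs of $H$ become forward arcs, which are all present), and (b) the trivial bookkeeping that $|T|\geq 2^{|H|}k$ indeed allows a partition into $k$ parts each of size $\geq 2^{|H|}$. If $H$ has no vertices at all the statement is vacuous, and if $H$ has no arcs it is even easier, so these degenerate cases cause no trouble. One could alternatively state the lemma only for $|H|\geq 1$; I would just handle it uniformly. This lemma will then feed, exactly as Corollary~\ref{cor:hitting-acyclic} does in the immersion section, into the acyclic case of Theorem~\ref{thm:main-topminors}: if $T$ has no $k$ vertex-disjoint copies of $H$ then $|T|<2^{|H|}k$, so (assuming $H$ has an arc) the whole vertex set of $T$ is an $H$-hitting set of size $\O_H(k)$.
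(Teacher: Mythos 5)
Your proof is correct and is essentially identical to the paper's: both partition $V(T)$ into $k$ blocks of size at least $2^{|H|}$, extract a transitive subtournament on $|H|$ vertices from each block, and embed the acyclic $H$ there via a topological order. No differences worth noting.
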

\begin{proof}
Arbitrarily partition the vertex set of $T$ into subsets $W_1,\ldots,W_k$ so that $|W_i|\geq 2^{|H|}$ for each $i\in [k]$. Since a tournament on $2^{|H|}$ vertices contains a transitive subtournament on $|H|$ vertices, which in turn contains $H$ as a subgraph, we infer that each tournament $T[W_i]$, $i\in [k]$, contains a topological minor copy of $H$. This gives $k$ vertex-disjoint topological minor copies of $H$ in $T$.
\end{proof}

\begin{corollary}\label{cor:top-hitting-acyclic}
Let $H$ be a simple digraph that is acyclic and let $k$ be a positive integer. Let $T$ be a tournament that does not contain $k$ vertex-disjoint topological minor copies of $H$. Then one can find in $T$ a set of at most $2^{|H|} k$ vertices that is $H$-hitting.
\end{corollary}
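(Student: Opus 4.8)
The plan is to mirror the proof of Corollary~\ref{cor:hitting-acyclic}, but the argument becomes noticeably simpler in the topological-minor setting: here a hitting set is a set of \emph{vertices}, and the whole vertex set $V(T)$ is trivially $H$-hitting, since $T-V(T)$ is the empty digraph and hence certainly $H$-topological-minor-free. So it suffices to bound $|V(T)|$ from above by $2^{|H|}k$.

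First I would record the elementary observation that every subgraph of $T$ isomorphic to $H$ is in particular a topological minor copy of $H$: taking $\whH$ to be that subgraph and $\phi$ the isomorphism, with each arc $(u,v)\in A(H)$ mapped to the single-arc path $\phi(u)\phi(v)$, realizes the definition of a topological minor embedding, because then every arc of $\whH$ is the image of exactly one arc of $H$ and $\whH$ has no vertices other than images of $V(H)$. Consequently, $k$ pairwise vertex-disjoint subgraphs of $T$ isomorphic to $H$ would constitute $k$ pairwise vertex-disjoint topological minor copies of $H$ in $T$.

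Since $T$ does not contain $k$ vertex-disjoint topological minor copies of $H$, it therefore does not contain $k$ vertex-disjoint subgraphs isomorphic to $H$ either. By the contrapositive of Lemma~\ref{lem:top-hitting-acyclic}, this forces $|T|<2^{|H|}k$, and taking the hitting set $X\coloneqq V(T)$ finishes the proof. There is essentially no obstacle: in contrast to Corollary~\ref{cor:hitting-acyclic}, we do not even need a separate corner case for digraphs $H$ with no arcs, as Lemma~\ref{lem:top-hitting-acyclic} makes no assumption on the arc set of $H$.
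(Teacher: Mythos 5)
Your proof is correct and matches the paper's argument, which likewise applies Lemma~\ref{lem:top-hitting-acyclic} in contrapositive to get $|T|<2^{|H|}k$ and then takes $V(T)$ itself as the hitting set. The extra details you supply (subgraphs are topological minor copies; no corner case is needed for arcless $H$) are accurate but the route is the same.
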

\begin{proof}
By Lemma~\ref{lem:top-hitting-acyclic} we have $|T|<2^{|H|} k$, so we can take the whole vertex set of $T$ as the requested $H$-hitting set.
\end{proof}

We now proceed to the strongly connected case and prove an analogue of Lemma~\ref{lem:hitting-strongly}. Note that in this setting, we can use the strategy from the proof of Lemma~\ref{lem:hitting-strongly} and directly achieve vertex-disjointness. Hence, we will need no counterpart of Lemma~\ref{lem:hitting-strongly-vrtx}.

\begin{lemma}\label{lem:top-hitting-strongly}
Let $H$ be a strongly connected simple digraph and let $T$ be a tournament that does not contain $k$ vertex-disjoint topological minor copies of $H$. Then in $T$ one can find a set of at most $2\dpw\|H\|\cdot k\log k$ vertices that is $H$-hitting.
\end{lemma}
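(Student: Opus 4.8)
The plan is to mirror the proof of Lemma~\ref{lem:hitting-strongly}, but working with interval decompositions instead of orderings, and exploiting the fact that a single vertex cut in an interval decomposition already separates all copies of a strongly connected $H$. We proceed by induction on $k$, with base case $k=1$ being trivial since then $T$ contains no topological minor copy of $H$ at all and the empty set is $H$-hitting.

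For the inductive step, assume $k\ge 2$. By Corollary~\ref{cor:top-no-kH}, there is an interval decomposition $I$ of $T$ of width at most $\dpw\|H\|k$. We look for a "balanced cut'': let $\alpha$ be the largest integer such that the tournament $T[I[\alpha]]$ (the vertices whose intervals lie inside $[0,\alpha]$) does not contain $\lceil k/2\rceil$ vertex-disjoint copies of $H$. As in Lemma~\ref{lem:hitting-strongly}, if $I[\alpha]=V(T)$ we are done by induction directly; otherwise, by maximality $T[I[\alpha+1]]$ contains $\lceil k/2\rceil$ vertex-disjoint copies, so $T-I[\alpha+1]$ (equivalently $T[I[\alpha+1,\infty)]$ up to the shifting conventions) cannot contain $\lfloor k/2\rfloor$ vertex-disjoint copies, lest we combine them into $k$ disjoint copies in $T$. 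By induction we get vertex sets $S_1$ of size at most $2\dpw\|H\|\lceil k/2\rceil\log\lceil k/2\rceil$ hitting all copies inside $T[I[\alpha]]$ and $S_2$ of size at most $2\dpw\|H\|\lfloor k/2\rfloor\log\lfloor k/2\rfloor$ hitting all copies inside $T-I[\alpha+1]$. We then set $S\coloneqq S_1\cup S_2\cup\vcut[\alpha]\cup\vcut[\alpha+1]$ (or just $\vcut[\alpha]$ — one cut may suffice here, since unlike the arc version a single vertex cut already blocks passage; but taking both is harmless and keeps the symmetry with the immersion proof).

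The correctness argument is where the strong connectivity is used: any topological minor copy $\whH$ of $H$ in $T$ has $\whH=\whH|_H$ strongly connected (as $H$ is), so by the analogue of Observation~\ref{obs:comp-map} for topological minors its vertex set lies inside a single strong component of $T$, hence cannot straddle the cut $\vcut[\alpha]$: either $V(\whH)\subseteq I[\alpha]$, or $V(\whH)\subseteq V(T)\setminus I[\alpha]$, or $\whH$ meets $\vcut[\alpha]$. Actually the clean way to phrase it: any copy avoiding $\vcut[\alpha]$ is entirely contained in $T[I[\alpha]]$ or entirely in $T-I[\alpha]$ — more precisely in $T-I[\alpha+1]$ after also removing $\vcut[\alpha+1]$ — and is therefore hit by $S_1$ or $S_2$ respectively. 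So $S$ is $H$-hitting.

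It remains to bound $|S|$. Using $\lceil k/2\rceil+\lfloor k/2\rfloor=k$, $\lceil k/2\rceil,\lfloor k/2\rfloor\le \tfrac{k+1}{2}\le k$, $\log\lceil k/2\rceil,\log\lfloor k/2\rfloor\le \log k - 1 + o(1)$ (for $k\ge2$, $\lceil k/2\rceil\le (k+1)/2$ gives $\log\lceil k/2\rceil\le \log(k+1)-1$), and $|\vcut[\alpha]|,|\vcut[\alpha+1]|\le \dpw\|H\|k$, a short computation of the form
\begin{align*}
|S| &\le 2\dpw\|H\|\bigl(\lceil k/2\rceil\log\lceil k/2\rceil+\lfloor k/2\rfloor\log\lfloor k/2\rfloor\bigr)+2\dpw\|H\|k\\
 &\le 2\dpw\|H\|\bigl(k(\log k-1)+k\bigr) = 2\dpw\|H\|\cdot k\log k
\end{align*}
closes the induction. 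The only subtlety — the "main obstacle'' — is making the logarithm arithmetic genuinely tight enough to absorb the two extra cuts into the recursion (one must be slightly careful with ceilings/floors and with the small-$k$ cases), exactly the standard Erd\H{o}s--P\'osa $k\log k$ bookkeeping; the structural content is entirely carried by strong connectivity forcing every copy to one side of a single cut.
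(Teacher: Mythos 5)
Your proposal follows essentially the same route as the paper's proof: induction on $k$, a balanced split at the largest $\alpha$ for which $T[I[\alpha]]$ lacks $\lceil k/2\rceil$ disjoint copies, and strong connectivity of $H$ to force every copy avoiding the cut entirely onto one side. The structural content is correct.

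The one genuine problem is the size computation, which as written does not establish the stated bound. By removing both $\vcut[\alpha]$ and $\vcut[\alpha+1]$ you pay $2\dpw\|H\|k$ at the top level, and the recursion then overshoots: the best available estimate is $\lceil k/2\rceil\log\lceil k/2\rceil+\lfloor k/2\rfloor\log\lfloor k/2\rfloor\le k\log(k/2)+\tfrac12$, which with two cuts gives $|S|\le 2\dpw\|H\|\bigl(k\log k-k+\tfrac12\bigr)+2\dpw\|H\|k=2\dpw\|H\|\,k\log k+\dpw\|H\|$, strictly more than $2\dpw\|H\|\,k\log k$. The inequality you use to absorb the extra term, $\lceil k/2\rceil\log\lceil k/2\rceil+\lfloor k/2\rfloor\log\lfloor k/2\rfloor\le k(\log k-1)$, is false for odd $k$: for $k=3$ the left-hand side is $2$ while the right-hand side is $3\log 3-3\approx 1.75$. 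The fix is exactly the option you mention parenthetically and the paper adopts: remove only a single vertex cut (the paper takes $\vcut[\alpha+1]$). Since interval endpoints are pairwise distinct integers, every vertex avoiding $\vcut[\alpha+1]$ lies either in $I[\alpha]$ or in $I[\alpha+2,\infty]$, all arcs between these two sets point forward, and strong connectivity of $H$ confines each surviving copy to one side; the recursion then closes, as $2\dpw\|H\|\bigl(k\log k-k+\tfrac12\bigr)+\dpw\|H\|k\le 2\dpw\|H\|\,k\log k$ for all $k\ge 1$.
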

\begin{proof}
 We proceed by induction on $k$. In the base case $k=1$ there are no  copies of $H$ in $T$, hence we can take the empty set as an $H$-hitting set. Let us then assume that $k\geq 2$.
 
 If $T$ does not contain $\lceil k/2\rceil$ vertex-disjoint copies of $H$, then as $\lceil k/2\rceil<k$, we may apply the induction assumption for $\lceil k/2\rceil$. Hence, from now on assume that $T$ contains $\lceil k/2\rceil$ vertex-disjoint copies of $T$.
 
 By Corollary~\ref{cor:top-no-kH}, $T$ admits an interval decomposition of width at most $\dpw\|H\|\cdot k$. Recall that we may assume that the endpoints of intervals in $I$ correspond to pairwise different nonnegative integers.
 Let $\alpha$ be the largest integer such that $T[I[\alpha]]$ does not contain $\lceil k/2 \rceil$ vertex-disjoint copies of $H$. By the assumption from the previous paragraph, $\alpha$ is well defined and $T[\alpha+1]$ contains $\lceil k/2 \rceil$ vertex-disjoint copies of $H$. It follows that $T[I[\alpha+2,\infty]]$ does not contain $\lfloor k/2 \rfloor$ vertex-disjoint copies of $H$, for otherwise in total we would obtain $\lceil k/2 \rceil+\lfloor k/2 \rfloor=k$ vertex-disjoint copies of $H$.
 
 By induction assumption, in $T[I[\alpha]]$ and in $T[I[\alpha+2,\infty]]$ we can find $H$-hitting sets $S_1$ and $S_2$ of sizes $2\dpw\|H\|\cdot \lceil k/2\rceil \log \lceil k/2\rceil$ and $2\dpw\|H\|\cdot \lfloor k/2\rfloor \log \lfloor k/2\rfloor$, respectively. Let
 $$S\coloneqq S_1\cup S_2\cup \vcut[\alpha+1].$$
 We claim that $S$ is $H$-hitting in $T$. Indeed, since $H$ is strongly connected, every copy of $H$ in $T$ that does not intersect $\vcut[\alpha]$ must be entirely contained either in $T[I[\alpha]]$ or in $T[I[\alpha+2,\infty]]$, and then it intersects $S_1$ or $S_2$, respectively.
 
 We are left with bounding the size of $S$. Observe that
 \begin{eqnarray*}
  |S| & \leq & |S_1|+|S_2|+|\vcut[\alpha+1]|\\\
      & \leq & 2\dpw\|H\|\left(\lceil k/2\rceil\log \lceil k/2\rceil+\lfloor k/2\rfloor\log \lfloor k/2\rfloor\right)+\dpw\|H\|k\\
      & \leq & 2\dpw\|H\|\left(k/2\left(\log\lceil k/2\rceil+\log\lfloor k/2\rfloor\right)+1/2\left(\log\lceil k/2\rceil-\log\lfloor k/2\rfloor\right)\right)+\dpw\|H\|k\\
      & \leq & 2\dpw\|H\|\left(k\log (k/2)+1/2\right)+\dpw\|H\|k\\
      & =    & \dpw\|H\|\left(2k\log k -2k+1+k\right) \leq 2\dpw\|H\|\cdot k\log k.
 \end{eqnarray*}
 This concludes the proof.
\end{proof}

%
%
%
%
%

We proceed to the main part of the proof, which is is again conceptually very close to the one presented in the case of immersions. It is arguably simpler, as we work only with vertex-disjointness.

\begin{lemma}\label{lem:top-hitting-weakly}
Let $H$ be a simple digraph that is not acyclic and let $k$ be a positive integer. Let $T$ be a tournament that does not contain $k$ vertex-disjoint topological minor copies of $H$. Then one can find in $T$ a set consisting of at most $6\dpw \cdot |H|!\cdot \|H\|\cdot k\log k$ vertices that is $H$-hitting.
\end{lemma}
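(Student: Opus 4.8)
The plan is to mirror the proof of Lemma~\ref{lem:hitting-weakly}, but working throughout with interval decompositions and vertex-disjointness instead of orderings and arc-disjointness. Let $\Comps$ be the family of strong components of $H$ and $h\coloneqq|\Comps|$; since $H$ is not acyclic there is a non-trivial component, so $h\leq|H|-1$. Let $\Pi$ be the set of topological orderings of $\Comps$, so $|\Pi|\leq h!\leq(|H|-1)!$. By Corollary~\ref{cor:top-no-kH}, $T$ admits an interval decomposition $I$ of width at most $p\coloneqq\dpw\|H\|k$, which we may assume has all interval endpoints distinct and positive. We then set a threshold $s\coloneqq 2k$ (an even integer with $s/2\geq k$). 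For each component $C\in\Comps$ and each integer $\alpha$, let $I(C,\alpha)$ be the inclusion-wise minimal interval $[\alpha,\beta]$ such that $T[I[\alpha,\beta]]$ contains $s$ vertex-disjoint copies of $C$ (and if none exists, take $\beta=\infty$, so at most $s+1$ copies fit anyway); let $B(C,\alpha)\coloneqq\vcut[\beta]$ be the right boundary cut; and let $A(C,\alpha)$ be, if $C$ is non-trivial, an $H$-hitting (here $C$-hitting) vertex set in $T[I(C,\alpha)]$ of size at most $2\dpw\|H\|\cdot s\log s$ obtained from Lemma~\ref{lem:top-hitting-strongly} applied with parameter $s$, and if $C$ is trivial, just the whole vertex set $I[I(C,\alpha)]$, which has size at most $s$. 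In all cases $|A(C,\alpha)|\leq 2\dpw\|H\|\cdot s\log s$ and $|B(C,\alpha)|\leq p$.

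Next, for each $\pi\in\Pi$ one builds a greedy sequence of intervals: set $\alpha_0\coloneqq 0$, and for $i=1,\dots,h$ put $I_{\pi,i}\coloneqq I(\pi^{-1}(i),\alpha_{i-1})$ with $\alpha_i$ its right endpoint, together with $A_{\pi,i}\coloneqq A(\pi^{-1}(i),\alpha_{i-1})$ and $B_{\pi,i}\coloneqq B(\pi^{-1}(i),\alpha_{i-1})$. The packing claim is the analogue of Claim~\ref{clm:packing}: if there is a $\pi$ for which every $T[I_{\pi,i}]$ contains $s$ vertex-disjoint copies of $\pi^{-1}(i)$, then $T$ contains $k$ vertex-disjoint copies of $H$, contradicting the hypothesis. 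This is actually cleaner than in the immersion case: the intervals $I_{\pi,1}<\dots<I_{\pi,h}$ are pairwise disjoint and \emph{every} arc of $T$ between a later interval and an earlier interval is forward (backward w.r.t.\ the decomposition, i.e.\ from lower to higher index) because $I$ is an interval decomposition — so to stitch one copy of each $C_{\pi,i}$ into a copy of $H$ we only need the inter-component arcs of $H$ (which all go forward, consistently with $\pi$) to be realizable as single forward arcs of $T$ between the chosen copies, and those arcs are automatically present and vertex-disjoint from everything else. We don't even need the probabilistic selection of Claim~\ref{clm:technical-new}: any choice of one copy per family works, since there are no "bad backward edges" to avoid. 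So we simply pick $k$ of the $s\geq 2k$ copies in each family. Hence we may assume no such $\pi$ exists, which forces $\bigcup_{i=1}^h I_{\pi,i}=V(T)$ for every $\pi$.

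We then set $S\coloneqq\bigcup_{\pi\in\Pi}\bigcup_{i=1}^h\bigl(A_{\pi,i}\cup B_{\pi,i}\bigr)$ and bound its size: $|S|\leq|\Pi|\cdot h\cdot\bigl(2\dpw\|H\|s\log s+p\bigr)\leq(|H|-1)!\cdot(|H|-1)\cdot\bigl(4\dpw\|H\|k\log(2k)+\dpw\|H\|k\bigr)$, which after absorbing the $\log 2$ term and the lower-order $\dpw\|H\|k$ into the constant is at most $6\dpw\cdot|H|!\cdot\|H\|\cdot k\log k$, as desired. It remains to show $S$ is $H$-hitting in $T'\coloneqq T-S$, which is the analogue of Claim~\ref{clm:hitting}. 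Define base intervals as the maximal intervals that, for every $\pi$ and $i$, are either contained in or disjoint from $I_{\pi,i}$; since all the $B_{\pi,i}$ cuts were removed, $T'$ has no arc between distinct base intervals going the "wrong" way, so by Observation~\ref{obs:comp-map} (its topological-minor analogue, which holds verbatim since topological minor copies of a strongly connected graph are strongly connected) every non-trivial strong component $C$ of a hypothetical copy $\whH$ of $H$ in $T'$ lands inside a single base interval. Order the base intervals $B_1<\dots<B_m$ that host non-trivial components; pushing them into a partition $J_1<\dots<J_m$ of $V(T)$ into unions of base intervals and arguing as before, the realized ordering of components is consistent with some $\pi\in\Pi$, giving a split $0=t_0<\dots<t_m=h$ with $\pi(\Comps_i)=(t_{i-1},t_i]$. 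Setting $L_i\coloneqq\bigcup_{C\in\Comps_i}I_{\pi,\pi(C)}$ we again take $n$ minimal with $\bigcup_{i\leq n}J_i\subseteq\bigcup_{i\leq n}L_i$, so $J_n\subseteq L_n$, hence $B_n\subseteq I_{\pi,\pi(C)}$ for some $C\in\Comps_n$; if $C$ is non-trivial, $A_{\pi,\pi(C)}$ being $C$-hitting in $T[I_{\pi,\pi(C)}]$ forces $\whH|_C$ to lie in $B_n$ after all, contradiction; if $C$ is trivial, $A_{\pi,\pi(C)}=I[I(C,\alpha_{\pi(C)-1})]$ removes \emph{all} of $I_{\pi,\pi(C)}$ from $T'$, so $T'[I_{\pi,\pi(C)}]$ is empty and cannot host the non-trivial component $C'\in\Comps_n$ with $V(\whH|_{C'})\subseteq B_n\subseteq I_{\pi,\pi(C)}$, again a contradiction. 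The main obstacle is getting the bookkeeping of base intervals, the partitions $\{J_i\}$ and $\{L_i\}$, and the minimal index $n$ exactly right so that the final contradiction lands; everything else transfers routinely from the immersion proof, and the packing step is genuinely easier here because interval decompositions make all "cross" arcs forward for free.
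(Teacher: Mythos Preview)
Your proof is correct and follows essentially the same approach as the paper's. The only difference is that the paper takes the threshold to be $k$ directly rather than your $s=2k$, since---as you yourself note---the probabilistic selection of Claim~\ref{clm:technical-new} is unnecessary in the interval-decomposition setting; this keeps the constants tight enough to reach the stated bound $6\dpw\cdot|H|!\cdot\|H\|\cdot k\log k$ exactly, whereas your ``absorbing'' step with $s=2k$ does not quite go through for small $k$ and large $|H|$.
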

\begin{proof}
Denote $T=(V,E)$. Define $\Comps$, $h$, $\Pi$, topological ordering and (non-)trivial components as in the proof of Lemma~\ref{lem:hitting-weakly}. Note that the same assertions about these objects apply.

By Corollary~\ref{cor:top-no-kH}, $T$ admits an interval decomposition $I$ of width at most 
$$p\coloneqq \dpw\|H\|\cdot k.$$
Recall that we may assume that the endpoints of the intervals of $I$ are pairwise different nonnegative integers.
Let $N$ be the largest interval end, i.e. $N\coloneqq \max\{\last(I(v))\colon v\in V\}$. Define functions
\[\beta\colon \Comps\times\mathbb{Z}\to \mathbb{Z}\qquad\text{and}\qquad A,\ B\colon\Comps\times \mathbb{Z}\to \Pow(V)\]
as follows:
\begin{itemize}[nosep]
\item $\beta(C,\alpha)$ is the minimum integer $\beta$ with the property that interval $T[I[\alpha,\beta]]$ contains at least $k$ vertex-disjoint topological minor copies of $C$. If no such $\beta$ exists, we set $\beta=N$. Note that since we assume that the endpoints of the intervals in $I$ are pairwise different, in either case $T[I[\alpha,\beta]]$ does not contain $k+1$ vertex-disjoint topological minor copies of $C$.
\item If $C$ is trivial, then $A(C,\alpha)=I[\alpha,\beta(C,\alpha)]$. If $C$ is non-trivial, then $A(C,\alpha)$ is a $C$-hitting set of vertices in $T[I[\alpha,\beta(C,\alpha)]]$ of size at most $2\dpw\|H\|\cdot (k+1)\log (k+1)\leq 5\dpw\|H\|\cdot k\log k$, whose existence follows from Lemma~\ref{lem:top-hitting-strongly}.
\item $B(C,\alpha)\coloneqq\vcut[\beta(C,\alpha)]$.
\end{itemize}\pagebreak[3]
Note that since $p\leq 5\dpw\|H\|\cdot k\log k$, for all $C$ and $\alpha$ we have 
$$|A(C,\alpha)|\leq 5\dpw\|H\|\cdot k\log k\qquad\textrm{and}\qquad|B(C,\alpha)|\leq p.$$

Consider an arbitrary $\pi\in \Pi$ and define indices $\alpha_{\pi,0}$, $\alpha_{\pi,1}$, $\ldots$, $\alpha_{\pi,h}$ by induction as follows: $\alpha_{\pi,0}\coloneqq 0$ and, for $i=1,2,\ldots,h$, set
\[\alpha_{\pi,i}\coloneqq \beta(\pi^{-1}(i),\alpha_{\pi,i-1}).\]   
Moreover, for $i\in [h]$ we define \[I_{\pi,i}\coloneqq I[\alpha_{\pi,i-1},\alpha_{\pi,i}],\qquad A_{\pi,i}\coloneqq A(\pi^{-1}(i),\alpha_{\pi,i-1})\qquad\text{and}\qquad B_{\pi,i}\coloneqq B(\pi^{-1}(i),\alpha_{\pi,i-1}).\]
Note that since no interval in the decomposition $I$ has length $0$, sets $I_{\pi,i}$ for $i\in [h]$ are pairwise disjoint. Moreover, since intervals in $I$ have pairwise different endpoints, for all $1\leq i<j\leq h$, all arcs with one endpoint in $I_{\pi,i}$ and second in $I_{\pi,j}$ have tail in $I_{\pi,i}$ and head in $I_{\pi,j}$.

The following statement can be proved using the same arguments as the corresponding claim  in the proof of Lemma~\ref{lem:hitting-weakly} (that is, Claim~\ref{clm:packing}).
 We simply join the copies of strong components of $H$ by single forward arcs between intervals $I_{\pi,i}$. 

\begin{claim}\label{clm:top-packing}
Suppose that there exists $\pi\in\Pi$ such that for every $i\in [h]$, the tournament $T[I_{\pi,i}]$ contains $k$ vertex-disjoint topological minor copies of $\pi^{-1}(i)$. Then $T$ contains $k$ vertex-disjoint topological minor copies of $H$. 
\end{claim}

Just as in the proof of Lemma~\ref{lem:hitting-weakly}, due to Claim~\ref{clm:top-packing} we may now assume that
\[\bigcup_{i=1}^{h} I_{\pi,i}=V(T)\qquad\text{for each $\pi\in \Pi$.}\]
Consider 
\[S\coloneqq \bigcup_{\pi\in\Pi}\bigcup_{i\in [h]} A_{\pi,i}\cup B_{\pi,i}.\]  Since $|\Pi|\leq (|H|-1)!$ due to $H$ not being acyclic, we have
$$|S|\leq (|H|-1)!\cdot |H|\cdot (5\dpw\|H\|\cdot k\log k+\dpw\|H\|\cdot k)\leq 6\dpw \cdot |H|!\cdot \|H\|\cdot k\log k.$$
So it is enough to prove that $S$ is $H$-hitting in $T$. Let $T'\coloneqq T-S$.

\begin{claim}\label{clm:top-hitting}
$T'$ is $H$-topological-minor-free.
\end{claim}
\begin{clproof} 
The proof follows precisely the same steps as the one of Claim~\ref{clm:hitting}, with minor and straightforward adjustments (e.g. instead of $\sigma$-intervals we consider simply intervals).
\end{clproof}

Claim~\ref{clm:top-hitting} finishes the proof of Lemma~\ref{lem:top-hitting-weakly}.
\end{proof}

Now we can finish the proof of Theorem~\ref{thm:main-topminors}.

\begin{proof}[of Theorem~\ref{thm:main-topminors}]
Suppose $T$ is a tournament that does not contain $k$ vertex-disjoint topological minor copies of $H$. If $H$ is acyclic, then, by Corollary~\ref{cor:top-hitting-acyclic}, we may find in $T$ a set of at most $2^{|H|}k\in \O_H(k)$ vertices that is $H$-hitting. On the other hand, if $H$ is not acyclic, then by Lemma~\ref{lem:top-hitting-weakly} we may find in $T$ an $H$-hitting set of vertices of size at most $6\dpw \cdot |H|!\cdot \|H\|\cdot k\log k\in\O_H(k\log k)$.
\end{proof}

\acknowledgements
The authors thank Jean-Florent Raymond for (i) suggesting the inductive strategy used in the proofs of Lemmas~\ref{lem:hitting-strongly} and~\ref{lem:top-hitting-strongly}, which in particular resulted in improving the bounding function from $\O_H(k^3)$ to $\O_H(k^2)$ for the Erd\H{o}s-P\'osa property for immersions of a strongly connected $H$, and from $\O_H(k^2)$ to $\O_H(k\log k)$ for the Erd\H{o}s-P\'osa property for topological minors, (ii) drawing our attention to the work of Fomin et al.~\cite{FominST11}, and (iii) many other comments that helped us in improving this manuscript.

\nocite{*}
\bibliographystyle{abbrv}
\bibliography{main-dmtcs}

\end{document}